\renewcommand {\a}{ \alpha }
\newcommand{\G}{\Gamma}
\renewcommand{\L}{\Lambda}
\newcommand{\z}{\zeta}
\newcommand{\p}{\partial}
\newcommand{\Om}{\Omega}
\newcommand{\oq}{\ {\raise 7pt\hbox{${\scriptstyle\circ}$}}
\kern -7pt{
\hbox{$Q$}}}
\newcommand{\R}{ \mathbb R}
\newcommand {\GA}{\mathfrak A}
\newcommand {\GS}{\mathfrak S}
\newcommand {\GW}{\mathfrak W}
\newcommand {\BS}{\mathbf S}
\newcommand {\bx}{\mathbf x}
\newcommand {\bz}{\mathbf z}
\newcommand {\by}{\mathbf y}
\newcommand {\bn}{\mathbf n}
\newcommand{\SN}{{\sf{N}}}
\newcommand {\bmu}{\boldsymbol\mu}
\newcommand {\bxi}{\boldsymbol\xi}
\newcommand{\plainC}[1]{\textup{{\textsf{C}}}^{#1}}
\DeclareMathOperator*{\esssup}{ess-sup}
\DeclareMathOperator{\tr}{{tr}}
\newcommand{\1}
{{\,\vrule depth3pt height9pt}{\vrule depth3pt height9pt}
{\vrule depth3pt height9pt}{\vrule depth3pt height9pt}\,}
\DeclareMathOperator {\re} {{Re}}
\DeclareMathOperator{\op}{{Op}}
\DeclareMathOperator{\supp}{{supp}}
\newtheorem{thm}{Theorem}[section]
\newtheorem{cor}[thm]{Corollary}
\newtheorem{lem}[thm]{Lemma}
\newtheorem{prop}[thm]{Proposition}
\theoremstyle{definition}
\newtheorem{defn}[thm]{Definition}
\newtheorem{rem}[thm]{Remark}
\numberwithin{equation}{section}
\newcommand{\bee}{\begin{equation}}
\newcommand{\ene}{\end{equation}}
\newcommand{\bees}{\begin{equation*}}
\newcommand{\enes}{\end{equation*}}
\newcommand{\bes}{\begin{split}}
\newcommand{\ens}{\end{split}}
\newcommand{\bet}{\begin{thm}}
\newcommand{\ent}{\end{thm}}
\newcommand{\bel}{\begin{lem}}
\newcommand{\enl}{\end{lem}}
\newcommand{\bec}{\begin{cor}}
\newcommand{\enc}{\end{cor}}
\newcommand{\bep}{\begin{proof}}
\newcommand{\enp}{\end{proof}}
\newcommand{\ber}{\begin{rem}}
\newcommand{\enr}{\end{rem}}
\begin{document}
\hoffset -4pc

\title
[The Widom conjecture]
{{ 
Wiener-Hopf operators in higher dimensions: 
the Widom conjecture for piece-wise smooth domains}}
\author{A.V. Sobolev}
\address{Department of Mathematics\\ University College London\\
Gower Street\\ London\\ WC1E 6BT UK}
\email{a.sobolev@ucl.ac.uk}
\keywords{Wiener-Hopf operators, 
pseudo-differential operators with discontinuous symbols,
quasi-classical asymptotics}
\subjclass[2010]{Primary  47G30, 35S05; Secondary 45M05, 47B10, 47B35}
\date{\today}

\begin{abstract}
We prove a two-term quasi-classical trace asymptotic formula for the 
functions of multi-dimensional Wiener-Hopf operators with discontinuous 
symbols. The discontinuities occur on surfaces which are assumed to be 
piece-wise smooth. Such a two-term formula was conjectured by H. Widom in 1982, and 
proved by A. V. Sobolev for smooth surfaces in 2009. 

\end{abstract}

\maketitle
  
\section{Introduction}
  
The quasi-classical functional 
calculus for smooth pseudo-differential operators was 
developed more than three decades ago (see e.g. \cite{Rob} ) and now it is 
considered a standard tool of microlocal analysis and spectral theory. 
On the contrary, for pseudo-differential operators 
with discontinuous symbols results are sparse and less well known.  
Various quasi-classical trace type formulas for Wiener-Hopf operators 
were obtained by  H. Widom in the 80's. 
In this article we shall be concerned with a multi-dimensional 
generalisation of one such result which has become known as 
\textsl{The Widom Conjecture}.  
Let  $a = a(\bx, \bxi), \bx, \bxi\in\R^d, d\ge 1$ be a smooth symbol.  
Introduce the standard notation for the left and right 
pseudo-differential operators with symbol $a$ and a 
quasi-classical parameter $\a>0$:
\begin{equation}\label{pdoleft:eq}
(\op^{\rm l}_\a (a) u)(\bx) = \biggl(\frac{ \a}{2\pi}\biggr)^d
\iint e^{i\a(\bx-\by)\bxi} a(\bx, \bxi) u(\by) d\bxi d\by,
\end{equation}
\begin{equation}\label{pdoright:eq}
(\op^{\rm r}_\a (a) u)(\bx) = \biggl(\frac{\a}{2\pi}\biggr)^d
\iint e^{i\a(\bx-\by)\bxi} a(\by, \bxi) u(\by) d\bxi d\by,
\end{equation}
for any function $u$ from the Schwartz class on $\R^d$.  
If the function $a$ depends only on $\bxi$ then the operators
$\op_\a^{\rm l}(a), \op_\a^{\rm r}(a)$  coincide with each other,
and we simply write $\op_\a(a)$. 
Here
and below integrals without indication of the domain  
are assumed to be taken over the entire 
Euclidean space $\R^d$. 

Let $\L, \Om$ be bounded domains in $\R^d$, and let $\chi_\L, \chi_\Om$ 
be their characteristic functions,\ $P_{\Om, \a} = \op_\a(\chi_\Om)$. 
We are interested in spectral properties 
of the operators 
\begin{equation*}
T_\a(a) = T_\a(a; \L, \Om) = \chi_{\L} P_{\Om, \a}\op^{\rm l}_\a(a) 
P_{\Om, \a}\chi_{\L},
\end{equation*}
and  
\begin{equation*}
S_\a(a) = S_\a(a; \L, \Om) = \chi_{\L} P_{\Om, \a}\ 
\re\op^{\rm l}_\a(a)\  P_{\Om, \a}\chi_{\L}.
\end{equation*}
These operators are naturally interpreted as multi-dimensional 
Wiener-Hopf operators with discontinuous symbols. 
Our aim is to find asymptotic formulas for 
the traces of the form $\tr g(T_\a), \tr g(S_\a)$ 
as $\a\to\infty$, with suitable functions $g$, $g(0) = 0$.   
If one of the domains, e.g. $\Om$, coincides with $\R^d$ then assuming that 
$a$ is infinitely differentiable and decays sufficiently fast, one can 
write out complete asymptotic expansions of the above traces in powers of $\a^{-1}$, 
see \cite{Widom_85}. Our main focus will be on the case when both domains 
$\L$ and $\Om$ are distinct from $\R^d$.
%
The Widom Conjecture states 
(see \cite{Widom_82}) that in this case 
\begin{equation}\label{widom0:eq}
\tr g(T_\a) = \a^d \ \GW_0 + \a^{d-1} \log\a \ \GW_1 + o(\a^{d-1}\log\a), 
\end{equation} 
as $\a\to\infty$. 
The precise formulas for the coefficients $\GW_0$, $\GW_1$ 
are given in Sect. \ref{main:sect}. The first term 
in \eqref{widom0:eq} 
is the standard Weyl asymptotics, whereas the second term is non-standard, 
and it 
describes the contribution of the boundaries $\p\L$, $\p\Om$. 
Emphasise that the second term contains a 
$\log$-factor which makes it different from 
the familiar asymptotic expansion in powers of $\a^{-1}$. 
The formula \eqref{widom0:eq} was proved by 
H. Widom in \cite{Widom_82} for $d=1$. For $d\ge 2$, 
in the case when one of the 
domains is a half-space, \eqref{widom0:eq} was justified in \cite{Widom_90}. 
For arbitrary bounded smooth 
domains in $\R^d$, $d\ge 2$, the conjecture was proved in \cite{Sob}. 

The main aim of this paper 
is to extend \eqref{widom0:eq} to piece-wise smooth domains. 
Apart from the purely mathematical motivation, the interest in such domains 
is dictated by applications in Mathematical Physics, and 
in particular in Quantum Information Theory, see \cite{GiKl}, \cite{HLS}, 
\cite{LSS}.  
The proof is based on the papers \cite{Sob}, \cite{Sob_2013}. 
Using a convenient partition of unity one separates contributions 
from the smooth and non-smooth parts of the boundaries $\p\L$ and $\p\Om$. 
For the smooth part one applies directly the local version 
of the asymptotic formula of the form \eqref{widom0:eq} 
from \cite{Sob}, 
whereas for the non-smooth part it suffices 
to establish appropriate trace bounds.  
Here a key role is played by inequalities obtained for arbitrary 
Lipschitz  domains in \cite{Sob_2013}. 
As a result one 
checks that the non-smooth portion of the boundaries contributes 
a term of size $o(\a^{d-1}\log\a)$, which leads to the global asymptotics 
\eqref{widom0:eq}.

The author is grateful to J. Oldfield and W. Spitzer for critical remarks. 
This work was supported by EPSRC grant EP/J016829/1.

\section{Main results}\label{main:sect}  
  
We begin with describing the classes of domains with which we work.   
In what follows we always assume that $d\ge 2$.

\begin{defn}
\begin{enumerate}
\item
We say that $\L\subset\R^d$ is a  
\textit{basic Lipschitz domain} 
(resp. \textit{basic $\plainC{m}$-domain, $m=1, 2, \dots$}) 
if  
there exists a Lipschitz (resp. $\plainC{m}$-) function 
$\Phi = \Phi(\hat\bx),\ \hat\bx\in\R^{d-1},$ such that 
with a suitable choice of the Cartesian coordinates $\bx = (\hat\bx, x_d)$, 
$\hat\bx = (x_1, x_2, \dots, x_{d-1})$ the domain $\L$ 
is represented as
\begin{equation}\label{Lambda:eq}
 \L = \{\bx\in\R^d: x_d > \Phi(\hat\bx)\}.  
 \end{equation}
 For a basic Lipschitz domain 
 the function $\Phi$ is assumed to be \textsl{uniformly} Lipschitz, 
 i.e. the  constant  
\begin{equation}\label{Mphi:eq}
 M = M_{\Phi} = \sup_{\substack{\hat\bx, \hat\by,\\
 \hat\bx\not = \hat\by 
} } 
 \frac{|\Phi(\hat\bx) - \Phi(\hat\by)|}
 {|\hat\bx - \hat\by|}
 \end{equation}
 is finite. 
For a basic $\plainC{m}$-domain  all the derivatives 
$\nabla^n\Phi$, $n = 1, 2, \dots, m$, are assumed to be uniformly bounded 
on $\R^{d-1}$.  
For a basic domain we use the notation $\L = \G(\Phi)$. 

\item 
A domain $\L\subset\R^d$ is said to be 
\textit{Lipschitz} (\textit{resp. $\plainC{m}, m = 1, 2, \dots$}) if $\L\not=\R^d$ and 
locally it can be represented by 
basic Lipschitz (resp. $\plainC{m}$-) domains, i.e.  
for any $\bz\in\L$ there is a radius $r >0$ such that 
$B(\bz, r)\cap \L = B(\bz, r)\cap \L_0$ with some basic 
Lipschitz (resp. $\plainC{m}$-) 
domain $\L_0 = \L_0(\bz)$ or with $\L_0 = \R^d$. 
In this case the boundary $\p\L$ is said to be a $(d-1)$-dimensional 
Lipschitz (resp. $\plainC{m}$-) surface. 

\item 
A basic Lipschitz 
domain $\L=\G(\Phi)$ is said to be piece-wise $\plainC{m}$  
with some $m = 1, 2, \dots$, if  the function 
$\Phi$ is $\plainC{m}$-smooth away from a collection of 
finitely many  
$(d-2)$-dimensional Lipschitz surfaces $L_1, L_2, \dots, L_n\subset \R^{d-1}$. 
More precisely, if $\L$ is given by \eqref{Lambda:eq} then 
for any open ball $B\subset\R^{d-1}$ such that $\overline B$ is disjoint 
with all surfaces $L_j, j=1, 2, \dots, n,$ we have $\Phi\in \plainC{m}(\overline B)$. 
Note that the derivatives of $\Phi$ are not required to be bounded 
uniformly in the choice of the ball $B$. 
We denote 
\begin{equation*}
(\p\L)_{\rm s} = \biggl\{\bx = \bigl(\hat\bx, \Phi(\hat\bx)\bigr), \hat\bx\in \bigcup_{j=1}^n L_j\biggr\},
\end{equation*}
i.e. $(\p\L)_{\rm s}\subset\p\L$ is the set of points 
where the $\plainC{m}$-smoothness of the surface $\p\L$ may break down. 

\item 
A Lipschitz domain $\L$ 
is said to be piece-wise  $\plainC{m}$, $m = 1, 2, \dots$,  
if locally it can be represented by piece-wise $\plainC{m}$ basic domains. 
As for the basic domains, by $(\p\L)_{\rm s}\subset\p\L$ we denote the set 
of points where the $\plainC{m}$-smoothness of $\p\L$ may break down. 
\end{enumerate}
\end{defn}

Let us define the asymptotic coefficients entering the main asymptotic formulas. 
For a symbol $b=b(\bx, \bxi)$ let 
\begin{equation}\label{w0:eq}
\GW_0(b) = \GW_0(b; \L, \Om)
= \frac{1}{(2\pi)^d} \int_{\L}\int_{\Om} b(\bx, \bxi) d\bxi d\bx.
\end{equation}
For any $(d-1)$-dimensional Lipschitz surfaces $L, P$ denote
\begin{equation}\label{w1:eq}
\GW_1(b) = \GW_1(b ; L, P) = \frac{1}{(2\pi)^{d-1}}\int_{L} \int_{P} b(\bx, \bxi)
| \bn_{L}(\bx)\cdot\bn_{P}(\bxi) |  dS_{\bxi} dS_{\bx},
\end{equation}
where $\bn_{L}(\bx)$ and $\bn_P(\bxi)$ denote the exterior unit normals to
$L$ and $P$ defined for a.a. $\bx$ and $\bxi$ respectively. 
For any continuous function $g$ 
on $\mathbb C$ such that $g(0) = 0$, and any number $s\in\mathbb C$, 
we also define
\begin{equation}\label{GA:eq}
\GA(g; s) = \frac{1}{(2\pi)^2}\int_0^1 \frac{g(st) - t g(s)}{t(1-t)} dt.
\end{equation}
The next theorem contains the main result of the paper.  

\begin{thm}\label{main_anal:thm}
Let $\L, \Om\subset\R^d$, $d\ge 2$ be bounded 
Lipschitz domains in $\R^d$ such that 
$\L$ is piece-wise $\plainC1$  and $\Om$ is piece-wise $\plainC3$.
Let  $a = a(\bx, \bxi)$ be a symbol whose distributional derivatives 
satisfy the bounds  
\begin{equation}\label{dersym:eq}
\max_{\substack{0\le n\le d+2\\ 0\le m\le d+2}}
\esssup_{\bx, \bxi}
|\nabla_{\bx}^n \nabla_{\bxi}^m a(\bx, \bxi)|
< \infty.
\end{equation}
Let $g$ be a function on $\mathbb C$ 
such that $g(0) = 0$, 
analytic in a disk of sufficiently large radius. 
Then 
\begin{align}\label{main_anal:eq}
\tr  g(T_\a(a))
= &\ \a^d \GW_0(g(a); \L, \Om)\notag\\[0.2cm]
&\ + \a^{d-1} \log\a\  \GW_1(\GA(g; a); \p\L, \p\Om) + o(\a^{d-1}\log\a),
\end{align}
as $\a\to\infty$.
\end{thm}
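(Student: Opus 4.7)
The approach, already signalled in the introduction, is to reduce the piece-wise smooth case to the smooth case treated in \cite{Sob} via a partition of unity that isolates the singular loci $(\p\L)_{\rm s}$, $(\p\Om)_{\rm s}$ from the rest of the two boundaries. Since $g$ is analytic in a disk containing the uniformly bounded spectrum of $T_\a(a)$, I would first use a Cauchy integral representation
\[
g(T_\a(a)) = \frac{1}{2\pi i}\oint_{\Ga} g(z)\bigl(z - T_\a(a)\bigr)^{-1}\,dz,
\]
or equivalently a Taylor expansion, to reduce the asymptotic problem to controlling $\tr T_\a(a)^n$ with error terms uniform in $n$. Each such power is a multilinear expression in $P_{\Om,\a}$, $\op_\a^{\rm l}(a)$ and $\chi_\L$, so smooth cutoffs in both $\bx$ and $\bxi$ can be inserted freely.

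Fix a small scale $\theta>0$ and construct $\plainC\infty$ cutoffs $\eta^\L_\theta(\bx)$ and $\eta^\Om_\theta(\bxi)$, equal to one on $\theta$-neighbourhoods of $(\p\L)_{\rm s}$ and $(\p\Om)_{\rm s}$ and supported in the corresponding $2\theta$-neighbourhoods. Write $\chi_\L=\chi_\L\eta^\L_\theta+\chi_\L(1-\eta^\L_\theta)$ and insert a matching splitting on the $\bxi$-side via $\eta^\Om_\theta$. Substituting these into the multilinear expression for $T_\a(a)^n$ generates \emph{regular} terms, in which every factor is supported inside the $\plainC1$- or $\plainC3$-smooth portion of $\p\L$ and $\p\Om$, and \emph{singular} terms, containing at least one factor supported near a boundary singularity.

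For the regular terms the local version of the two-term formula from \cite{Sob} applies directly, since on the support of the smooth cutoffs the boundaries satisfy the regularity hypotheses of that paper. Summing, sending $\a\to\infty$, and then $\theta\to 0$, the $\a^d$-coefficient converges to $\GW_0(g(a);\L,\Om)$ because $(\p\L)_{\rm s}$ has Lebesgue measure zero in $\L$, while the $\a^{d-1}\log\a$-coefficient converges to $\GW_1(\GA(g;a);\p\L,\p\Om)$ because $(\p\L)_{\rm s}$ and $(\p\Om)_{\rm s}$, being $(d-2)$-dimensional, have zero $(d-1)$-Hausdorff measure on their respective boundaries.

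It then remains to show that the singular terms contribute $o(\a^{d-1}\log\a)$, and this is where the essential new work lies. Near the singular sets the geometry is only Lipschitz, so the smooth pseudo-differential calculus of \cite{Sob} is unavailable and one must rely on the Schatten-class bounds for operators of the form $\chi_\L\op_\a(a)(1-\chi_\L)$ (and their symmetrised versions) over arbitrary Lipschitz domains established in \cite{Sob_2013}. Since the $\theta$-neighbourhoods of $(\p\L)_{\rm s}$ and $(\p\Om)_{\rm s}$ on the respective boundaries have $(d-1)$-Hausdorff measure $O(\theta)$, those inequalities should produce, for each singular term, a bound of the form $O(\theta^\kappa\,\a^{d-1}\log\a)$ for some $\kappa>0$, with constants controlled sufficiently well in the number of factors to survive re-summation of the Cauchy integral / power series for $g$. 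Sending $\theta\to 0$ after $\a\to\infty$ then absorbs these terms into the desired $o(\a^{d-1}\log\a)$ remainder, which together with the regular contribution yields \eqref{main_anal:eq}. The principal obstacle is precisely this uniform-in-$n$ Lipschitz trace estimate with a positive-power gain in $\theta$: extracting such a gain from the bounds of \cite{Sob_2013} without losing control over the $n$-dependence is the delicate part of the argument.
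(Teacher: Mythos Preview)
Your high-level strategy---partition of unity separating regular from singular boundary pieces, apply \cite{Sob} on the regular part, and use Lipschitz trace bounds from \cite{Sob_2013} on the singular part---is exactly the paper's strategy. However, the execution differs in two important respects, and the obstacle you flag at the end is one the paper sidesteps entirely by restructuring the argument.

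First, the paper does \emph{not} attempt to control the partition-of-unity argument uniformly in the power $n$. Instead it proves the asymptotics first for a \emph{fixed} polynomial $g_p(t)=t^p$ (Theorem~\ref{main:thm}), with constants allowed to depend on $p$. The passage from polynomials to analytic $g$ is then carried out exactly as in \cite{Sob}, Section~12, by approximating $g$ with its Taylor polynomials; that step needs no information about the boundary geometry beyond what is already encoded in the polynomial asymptotics, so the $n$-uniformity problem you worry about never arises.

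Second, and more substantively, the mechanism that produces the $\theta$-gain on the singular part is not simply a raw application of the Schatten bounds from \cite{Sob_2013}. The paper first uses a commutator argument (Lemma~\ref{through:lem}) to replace $g_p(T_\a(a))$ by $g_p(T_\a(1))$ modulo $O(\a^{d-1})$, so one only needs to handle the constant-symbol operator. Then, on each singular $\varepsilon$-ball, one writes $g_p(t)-t = t(1-t)\tilde g(t)$ and invokes the key estimate
\[
\|\op_\a(b_{jk})\,T_\a(1)\bigl(I-T_\a(1)\bigr)\|_{\GS_1}\le C(\a\varepsilon^2)^{d-1}\log(\a\varepsilon^2),
\]
which is Corollary~3.6 (a consequence of Proposition~\ref{HS:prop} from \cite{Sob_2013}). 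Counting balls---$O(\varepsilon^{3-2d})$ pairs $(j,k)$ touch the singular set, each contributing $O(\varepsilon^{2(d-1)}\a^{d-1}\log\a)$---gives a total singular contribution $O(\varepsilon\,\a^{d-1}\log\a)$, and sending $\varepsilon\to0$ finishes. Your proposal gestures toward the right bounds but does not identify this $t(1-t)$ factorisation, which is what actually makes the Lipschitz estimate applicable; without it one does not obviously extract any gain in $\theta$ from \cite{Sob_2013}.
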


For the self-adjoint operator $S_\a(a)$ we have a wider choice of
functions $g$:

\begin{thm}\label{main_s:thm}
Let the domains $\L, \Om\subset\R^d$, $d\ge 2$, and the symbol $a$ be 
as in Theorem \ref{main_anal:thm}.  
Then for any function $g\in\plainC\infty(\R)$, such that
$g(0) = 0$, one has
\begin{align}\label{main_s:eq}
\tr  g(S_\a(a))
= &\ \a^d \GW_0(g(\re a); \L, \Om)\notag\\[0.2cm]
&\ + \a^{d-1} \log\a\  \GW_1(\GA(g; \re a); \p\L, \p\Om) + o(\a^{d-1}\log\a),
\end{align}
as $\a\to\infty$.
\end{thm}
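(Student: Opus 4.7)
The strategy is to extend Theorem \ref{main_anal:thm} from analytic to $\plainC\infty$ test functions by a density argument that relies crucially on the self-adjointness of $S_\a(a)$. Since the symbol bound \eqref{dersym:eq} ensures, via standard $L^2$-boundedness estimates for pseudo-differential operators, that $\|S_\a(a)\|\le M$ uniformly in $\a$, the spectrum $\spec S_\a(a)\subset [-M,M]$ for all $\a$.

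The first step is an $S_\a$-version of Theorem \ref{main_anal:thm}. Since $\bigl(\op^{\rm l}_\a(a)\bigr)^*=\op^{\rm r}_\a(\bar a)$, one has
\[
\re\op^{\rm l}_\a(a)=\op^{\rm l}_\a(\re a)+\tfrac12\bigl(\op^{\rm r}_\a(\bar a)-\op^{\rm l}_\a(\bar a)\bigr),
\]
and the bracketed term is a pseudo-differential operator of subprincipal order $\a^{-1}$. Thus $S_\a(a)$ coincides with $T_\a(\re a)$ modulo an error that can be absorbed into the remainder, and running the proof of Theorem \ref{main_anal:thm} with $\re a$ in place of $a$ yields \eqref{main_s:eq} for every $g$ analytic on a disk of radius $>M$ with $g(0)=0$; in particular this covers all polynomials vanishing at zero.

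The second step is the density extension. Introduce the linear functional
\[
R_\a(g):=\tr g\bigl(S_\a(a)\bigr)-\a^d\GW_0(g(\re a);\L,\Om)-\a^{d-1}\log\a\cdot \GW_1\bigl(\GA(g;\re a);\p\L,\p\Om\bigr).
\]
The crucial ingredient is a uniform remainder bound
\[
\limsup_{\a\to\infty}\frac{|R_\a(h)|}{\a^{d-1}\log\a}\le C\,\|h\|_{\plainC{N}([-M,M])}
\]
valid for every $h\in\plainC{N}(\R)$ with $h(0)=0$, for some fixed integer $N$ and constant $C$ independent of $h$. Granted this bound, I approximate $g\in\plainC\infty(\R)$ with $g(0)=0$ by polynomials $p_n$ satisfying $p_n(0)=0$ and $\|g-p_n\|_{\plainC{N}([-M,M])}\to 0$ (standard polynomial approximation in $\plainC{N}$ on a compact interval), write $R_\a(g)=R_\a(p_n)+R_\a(g-p_n)$, send $\a\to\infty$ for fixed $n$ using the analytic case, and finally $n\to\infty$ using the uniform bound, thereby obtaining $R_\a(g)=o(\a^{d-1}\log\a)$, which is precisely \eqref{main_s:eq}.

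The principal obstacle is proving the uniform remainder bound. The proof of Theorem \ref{main_anal:thm} rests on a partition of unity separating the $\plainC{1}$-smooth portions of $\p\L$ and $\p\Om$ from their singular subsets $(\p\L)_{\rm s}$ and $(\p\Om)_{\rm s}$; on the smooth pieces one invokes the local asymptotic formula of \cite{Sob}, while on the singular pieces the Lipschitz-domain trace inequalities of \cite{Sob_2013} supply the required $o(\a^{d-1}\log\a)$ bound. Both tools produce a remainder of the correct order for any individual admissible $g$; the work for Theorem \ref{main_s:thm} is to track the dependence of those remainders on a finite number of derivatives of $g$ on $[-M,M]$. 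This is largely a matter of bookkeeping, but it is the technical heart of the argument, as each estimate in \cite{Sob} and \cite{Sob_2013} must be re-read to extract an explicit polynomial dependence on seminorms of $g$.
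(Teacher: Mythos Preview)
Your proposal is correct and follows essentially the same approach as the paper. The paper does not give a self-contained proof of Theorem~\ref{main_s:thm}; it simply states that Theorems~\ref{main_anal:thm} and~\ref{main_s:thm} are derived from the polynomial case (Theorem~\ref{main:thm}) ``by approximating $g$ with polynomials, in the same way as in \cite{Sob}, Section~12,'' and your outline is precisely a fleshed-out version of that argument: uniform spectral localisation of $S_\a(a)$ via self-adjointness, polynomial approximation in $\plainC{N}$ on a fixed compact interval, and a uniform remainder estimate controlling $R_\a(h)$ by a $\plainC{N}$-seminorm of $h$.
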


As in \cite{Sob} the crucial step of the proof is 
to prove the formula \eqref{main_anal:eq} for polynomial functions. 

\begin{thm}\label{main:thm}
Let the domains $\L, \Om\subset\R^d$, $d\ge 2$,
 and the symbol $a$ be as in Theorem \ref{main_anal:thm}. 
Then for $g_p(t) = t^p, p= 1, 2,  \dots, $ we have
\begin{align}\label{main:eq}
\tr  g_p(T_\a(a))
= &\ \a^d \GW_0(g_p(a); \L, \Om)\notag\\[0.2cm]
&\ + \a^{d-1} \log\a\  \GW_1(\GA(g_p; a); \p\L, \p\Om) + o(\a^{d-1}\log\a),
\end{align}
as $\a\to\infty$.
If $T_\a(a)$ is replaced with $S_\a(a)$, then the same formula holds
with the symbol $a$ replaced by $\re a$ on the right-hand side.
\end{thm}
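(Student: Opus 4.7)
The plan is to reduce to the smooth-boundary case of \cite{Sob} by a partition of unity on the boundaries $\p\L$ and $\p\Om$ that separates pieces lying at a positive distance from the singular sets $(\p\L)_{\rm s}$ and $(\p\Om)_{\rm s}$ from small neighbourhoods of these sets. Since $g_p(t)=t^p$ is a monomial, the operator $g_p(T_\a(a))=T_\a(a)^p$ is a $p$-fold product of factors of the form $\chi_\L P_{\Om,\a}\op^{\rm l}_\a(a) P_{\Om,\a}\chi_\L$, so one can insert a partition of unity in the position variable around $\p\L$ and, after commuting with $P_{\Om,\a}$, a second partition of unity in the frequency variable around $\p\Om$. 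This produces a finite decomposition $\tr g_p(T_\a(a))=\sum_k \tr R_{\a,k}$ in which each $R_{\a,k}$ is either (a) localised away from the singular sets in both position and frequency, or (b) localised in a neighbourhood of size $\vare$ of $(\p\L)_{\rm s}$ or $(\p\Om)_{\rm s}$.

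For the terms of type (a) I would invoke the local version of the Widom asymptotics for $\plainC 1$ (position) and $\plainC 3$ (frequency) boundaries established in \cite{Sob}: on each such localisation the boundaries are uniformly smooth, so one obtains the contribution
\bees
\a^d\GW_0\bigl(g_p(a);\L,\Om\bigr)_k+\a^{d-1}\log\a\,\GW_1\bigl(\GA(g_p;a);\p\L,\p\Om\bigr)_k+o(\a^{d-1}\log\a),
\enes
where the subscript $k$ indicates restriction to the support of the cutoffs. Summing over the type-(a) terms one recovers the full coefficients $\GW_0(g_p(a);\L,\Om)$ and $\GW_1(\GA(g_p;a);\p\L,\p\Om)$ up to contributions coming from the $\vare$-neighbourhoods of $(\p\L)_{\rm s}$ and $(\p\Om)_{\rm s}$; but since both $\GW_0$ and $\GW_1$ are absolutely continuous with respect to the volume and surface measures respectively, these missing contributions are $O(\vare)$ and vanish in the final limit $\vare\to 0$.

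For the terms of type (b) one cannot expand, and must instead prove an \emph{upper bound} of the form $|\tr R_{\a,k}|\le C(\vare)\a^{d-1}\log\a$ with $C(\vare)\to 0$ as $\vare\to 0$. Here I would use the trace inequalities for Lipschitz domains from \cite{Sob_2013}. Those estimates provide, for a single factor of the type $\chi_\L P_{\Om,\a}\op^{\rm l}_\a(a)P_{\Om,\a}\chi_\L$ sandwiched with localising cutoffs, trace-norm bounds of order $\a^{d-1}\log\a$ with a constant controlled by the $(d-1)$-dimensional surface measure of the relevant cutoff support. Since $(\p\L)_{\rm s}$ and $(\p\Om)_{\rm s}$ are unions of finitely many $(d-2)$-dimensional Lipschitz surfaces, their $\vare$-tubular neighbourhoods inside $\p\L$ and $\p\Om$ have surface measure $O(\vare)$, yielding the desired small factor. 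Iterating the single-factor bounds by H\"older's inequality for trace ideals handles the $p$-fold product.

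The main obstacle, and the place where the present argument is doing real work beyond \cite{Sob}, is extracting from \cite{Sob_2013} a trace bound for the composite operator that depends on the localisation support through its $(d-1)$-dimensional measure with the correct order $\vare\a^{d-1}\log\a$: the Lipschitz estimates in \cite{Sob_2013} must be applied carefully so that their constants are genuinely controlled by the local Lipschitz modulus of $\p\L$ and $\p\Om$ in the $\vare$-neighbourhood (which is uniform by the definition of piece-wise $\plainC{m}$ basic domains) and by the surface measure of the support, rather than by quantities that blow up as $\vare\to 0$. Once this uniform, measure-dependent Lipschitz trace bound is in hand, the proof is concluded by letting $\a\to\infty$ first and $\vare\to 0$ second, and the corresponding statement for $S_\a(a)$ follows identically by replacing $a$ with $\re a$ throughout.
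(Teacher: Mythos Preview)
Your overall architecture matches the paper's: a partition of unity separating the smooth parts of $\p\L\times\p\Om$ from $\vare$-neighbourhoods of the singular sets, invoking \cite{Sob} on the smooth pieces, and showing the singular pieces contribute $o(\a^{d-1}\log\a)$ as $\vare\to 0$. The counting is also right: the number of bad $\vare$-balls is $O(\vare^{3-2d})$ and each should contribute $O(\vare^{2(d-1)}\a^{d-1}\log\a)$, yielding $O(\vare\,\a^{d-1}\log\a)$.

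There is, however, a genuine gap in your treatment of the type (b) pieces. You claim $|\tr R_{\a,k}|\le C(\vare)\a^{d-1}\log\a$, asserting that \cite{Sob_2013} gives trace-norm bounds of order $\a^{d-1}\log\a$ for ``a single factor of the type $\chi_\L P_{\Om,\a}\op^{\rm l}_\a(a)P_{\Om,\a}\chi_\L$ sandwiched with localising cutoffs''. This is false: that operator, localised to balls of radius $\vare$ in $\bx$ and $\bxi$, has trace norm of order $(\a\vare^2)^d$, and its trace carries a genuine Weyl contribution $\a^d\GW_0$ that does \emph{not} vanish as $\vare\to 0$ on the scale $\a^{d-1}\log\a$. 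The $\a^{d-1}\log\a$ bounds in \cite{Sob_2013} (Proposition~\ref{HS:prop} here) apply to $\chi_\L\op_\a(b)P_{\Om,\a}(1-\chi_\L)$, i.e.\ to a commutator-type object, not to $T_\a$ itself. H\"older for trace ideals cannot repair this, since each factor $T_\a$ is large in every Schatten norm.

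The paper closes this gap with two additional reductions you are missing. First, via commutator estimates (Lemma~\ref{through:lem}) one replaces $a$ by $1$, so that on each $\vare$-patch one is looking at $\tr\bigl(\op_\a(b_{jk})\,g_p(T_\a(1))\bigr)$. Second, and this is the key algebraic trick, one writes
\[
g_p(t) = t + t(1-t)\tilde g(t)
\]
for a polynomial $\tilde g$. The linear term produces the Weyl contribution $\a^d\GW_0(b_{jk})$ exactly, up to $O(\a^{d-1})$ (Lemma~\ref{semi:lem}). The remaining piece is $\op_\a(b_{jk})\,T_\a(1)(I-T_\a(1))\,\tilde g(T_\a(1))$, and now the Lipschitz estimate \eqref{nucl:eq} does give $\|\op_\a(b_{jk})T_\a(1)(I-T_\a(1))\|_{\GS_1}\le C(\a\vare^2)^{d-1}\log(\a\vare^2)$, which is the bound you wanted. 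So your plan becomes correct once you (i) subtract off the Weyl part on the bad patches rather than bounding the raw trace, and (ii) obtain the $\a^{d-1}\log\a$ bound from the factor $T(I-T)$ rather than from $T$ alone.
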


In the next theorem 
the domain $\L$ is allowed to be unbounded, 
in which case we replace 
formula \eqref{main:eq} 
with its regularized variant.

\begin{thm}\label{main_inf:thm}
Let $\L, \Om\subset\R^d$, $d\ge 2$ be Lipshitz 
domains in $\R^d$ such that 
\begin{enumerate}
\item
$\Om$ is bounded and piece-wise $\plainC3$,
\item
$\L$ or $\R^d\setminus\L$ is bounded, and  
$\L$ is piece-wise $\plainC1$. 
\end{enumerate}
Let the symbol $a$ be as in Theorem \ref{main_anal:thm}. Then 
\begin{align}\label{main_inf:eq}
\lim_{\a\to\infty}\frac{1}{\a^{d-1}\log\a}
\tr\bigl[
g_p(T_\a(a; \L, \Om)) - \chi_\L g_p(T_\a(a; &\ \R^d, \Om))\chi_\L 
\bigr]\notag\\[0.2cm]
=  &\ \GW_1(\GA(g_p; a); \p\L, \p\Om),
\end{align} 
for any $p = 1, 2, \dots$. 
If $T_\a(a)$ is replaced with $S_\a(a)$, then the same formula holds
with the symbol $a$ replaced by $\re a$ on the right-hand side.
\end{thm}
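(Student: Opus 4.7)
The proof splits according to the two alternatives in the hypothesis on $\L$; throughout, write $M_\a := T_\a(a; \R^d, \Om) = P_{\Om,\a}\,\op^{\rm l}_\a(a)\,P_{\Om,\a}$.

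First suppose $\L$ itself is bounded. Then both $g_p(T_\a(a; \L, \Om))$ and $\chi_\L\, g_p(M_\a)\, \chi_\L$ are trace class. Theorem \ref{main:thm} supplies the full two-term asymptotic for $\tr g_p(T_\a(a; \L, \Om))$, so it remains only to verify that the reference trace contributes only the Weyl term:
\begin{equation*}
\tr \chi_\L M_\a^p \chi_\L = \a^d\,\GW_0(g_p(a); \L, \Om) + o(\a^{d-1}\log\a).
\end{equation*}
Here $M_\a^p$ has a symbol smooth in $\bx$, its only discontinuities (at $\p\Om$) lying in $\bxi$, while the cut-off $\chi_\L$ is applied outside the polynomial. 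This is the one-sided setting of Widom \cite{Widom_85}, in which a complete asymptotic expansion in non-negative powers of $\a^{-1}$ is available, with leading Weyl coefficient and no $\log\a$-term. Subtraction yields \eqref{main_inf:eq}.

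Now suppose $\L' := \R^d \setminus \L$ is bounded; then $\L'$ is itself a bounded piece-wise $\plainC{1}$ domain with $\p\L' = \p\L$. A straightforward induction on $p$, using $\chi_\L = I - \chi_{\L'}$ and $\chi_\L \chi_{\L'} = 0$, yields the telescoping identity
\begin{equation*}
T_\a(a; \L, \Om)^p - \chi_\L M_\a^p \chi_\L \;=\; -\sum_{j=1}^{p-1} (\chi_\L M_\a)^j\, \chi_{\L'}\, M_\a^{p-j}\, \chi_\L.
\end{equation*}
Every summand contains the compactly supported factor $\chi_{\L'}$ and is therefore trace class, whence the difference on the left is trace class as well. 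Taking the trace, applying cyclicity, and iteratively substituting $\chi_\L = I - \chi_{\L'}$ inside the remaining $\chi_\L$-factors, one reduces the left-hand side to a finite integer combination of traces of the form $\tr \chi_{\L'} M_\a^{k_1} \chi_{\L'} M_\a^{k_2}\cdots \chi_{\L'} M_\a^{k_m}$ with $k_1+\cdots+k_m=p$ and $m\ge 1$. The integer coefficients in this combination sum to zero, so that the leading Weyl contributions ($\a^d \GW_0(g_p(a); \L', \Om)$, common to every such trace at principal order since $\chi_{\L'}$ commutes with pseudodifferential composition to leading order) cancel exactly. The surviving sub-leading $\log\a$ contributions combine, via the local asymptotic expansions of \cite{Sob}, into precisely $\a^{d-1}\log\a \cdot \GW_1(\GA(g_p;a); \p\L', \p\Om)$; by the invariance $|\bn_{\p\L'}\cdot\bn_{\p\Om}| = |\bn_{\p\L}\cdot\bn_{\p\Om}|$ this equals the required $\a^{d-1}\log\a \cdot \GW_1(\GA(g_p;a); \p\L, \p\Om)$.

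The principal obstacle lies in the second case: verifying, via the local asymptotic of \cite{Sob} and the Lipschitz trace bounds of \cite{Sob_2013}, that the sub-leading logarithmic contributions of the various mixed traces combine combinatorially into exactly the $\GA(g_p;a)$-factor prescribed by Theorem \ref{main:thm} applied to $\L'$, while the non-smooth pieces of $(\p\L')_{\rm s}\cup(\p\Om)_{\rm s}$ are absorbed into the $o(\a^{d-1}\log\a)$ remainder. The Weyl cancellation is the easier part, following from the fact that the relevant integer coefficients sum to zero. The variant for $S_\a(a)$ follows by the identical argument with $a$ replaced by $\re a$ and the self-adjoint functional calculus.
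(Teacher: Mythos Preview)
Your treatment of the bounded-$\L$ case is essentially the paper's argument, with one caveat: you invoke \cite{Widom_85} for the reference trace $\tr \chi_\L M_\a^p \chi_\L$, but that reference gives complete expansions only for \emph{smooth} domains, whereas here $\L$ is merely piece-wise $\plainC1$. The paper instead proves the required estimate directly (Lemma~\ref{semi:lem} and formula~\eqref{main_inf_h1:eq}), obtaining $\tr \chi_\L M_\a^p \chi_\L = \a^d\GW_0(g_p(a);\L,\Om) + O(\a^{d-1})$ from elementary trace computations and the commutator bounds of Proposition~\ref{sandwich:prop}. With that correction your first case is fine.

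The second case, however, has a genuine gap that you yourself flag. Your combinatorial expansion produces mixed traces $\tr \chi_{\L'} M_\a^{k_1}\chi_{\L'}M_\a^{k_2}\cdots$, but the two-term asymptotics available from \cite{Sob} are stated for $\tr g_p(T_\a)$, not for such products; extracting the $\log\a$ coefficient of each mixed trace and then verifying that the resulting combination reproduces $\GA(g_p;a)$ is not carried out, and it is not clear how to do so without essentially reproving the local asymptotics in a more general form. Note also that commuting a factor $\chi_{\L'}$ through $M_\a$ costs $O(\a^{d-1}\log\a)$ in trace norm (Proposition~\ref{HS:prop}), so one cannot simply collapse the products modulo lower-order errors.

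The paper avoids this entirely. It introduces a smooth cutoff $h\in\plainC\infty_0$ with $h=1$ on $\R^d\setminus\L$ and proves (Lemma~\ref{DA:lem}) that
\[
\bigl\|(I-h)\bigl[g_p(T_\a(a;\L,\Om)) - g_p(T_\a(a;\R^d,\Om))\bigr]\bigr\|_{\GS_1} \le C\a^{d-1},
\]
using only the commutator estimates~\eqref{localcom:eq} and an induction on $p$. Since $(1-h)\chi_\L = 1-h$, this localizes the trace difference in~\eqref{main_inf:eq} to the compactly supported weight $h$, after which the already-established local formulas~\eqref{main_inf_h:eq} and~\eqref{main_inf_h1:eq} apply verbatim. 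No combinatorial recombination is needed; the $\GA(g_p;a)$ factor emerges directly from the local asymptotics, exactly as in the bounded case.
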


Note that for bounded domains $\L$ formula \eqref{main_inf:eq} 
is just another way to write the asymptotics \eqref{main:eq}, 
see Proof of Theorem \ref{main_inf:thm}. On the other hand, for 
unbounded $\L$ formula \eqref{main_inf:eq} is an independent result. 

Theorems \ref{main_anal:thm} and \ref{main_s:thm} are derived from Theorem 
\ref{main:thm} in the same way as in \cite{Sob} for smooth domains, 
and we do not provide details.
However the methods of \cite{Sob} do not allow one to 
derive from Theorem \ref{main_inf:thm} 
analogues of Theorems \ref{main_anal:thm} or \ref{main_s:thm} 
for unbounded domains $\L$. This generalization will be done in another 
publication. 

The main focus of the rest of this paper is on the proof 
of Theorems \ref{main:thm} and \ref{main_inf:thm}. 


 \section{Auxiliary results}

Here we collect some trace  estimates and asymptotic formulas 
from \cite{Sob} and \cite{Sob_2013} used in the proofs.  
The trace estimates 
established in \cite{Sob} required that $\L$ and $\Om$ be 
$\plainC1$-smooth domains. 
In \cite{Sob_2013} most of those estimates are proved under the Lipschitz 
assumption only. 
On the other hand, the article \cite{Sob_2013} does not duplicate 
\cite{Sob}, and thus in the current article some of the estimates from \cite{Sob} 
are re-proved for Lipschitz domains.

\subsection{Notation. Smooth symbols}
In order to allow consideration of symbols $b = b(\bx, \bxi)$ 
with different scaling properties, we define for any $\ell, \rho>0$ 
the norms
\begin{equation}\label{norm:eq}
\SN^{(n, m)}(b; \ell, \rho)
= \underset{\substack
{0\le k\le n\\
0\le r\le m}}
\max \ \underset{\bx, \bxi} 
{\textup{ess}\sup} \ \ell^{k} \rho^{r}
|\nabla_{\bx}^{k} \nabla_{\bxi}^r b(\bx, \bxi)|,
\end{equation}
with $n, m = 0, 1, \dots$. 
If the norm \eqref{norm:eq} is finite for some (and hence for 
all) $\ell, \rho>0$ then 
we say that the symbol $b$ belongs to the class $\BS^{(n, m)}$. 
 
Below we often assume that various symbols $b = b(\bx, \bxi)$ 
are compactly supported, and the choice of the parameters $\ell, \rho$ 
in \eqref{norm:eq} is coordinated with the size of support. Precisely, we suppose that 
\begin{equation}\label{support:eq}
b \ \textup{is supported on}\ \ 
B(\bz, \ell)\times B(\bmu, \rho),  
\end{equation}
with some $\bz, \bmu\in\R^d$.

In what follows most of the bounds are obtained under the assumption that 
$\a\ell\rho\ge \ell_0$ with some fixed positive number $\ell_0$. 
The constants featuring in all the estimates below are independent of the 
symbols involved as well as of the parameters  
$\bz, \bmu, \a, \ell, \rho$ but may depend 
on the constant $\ell_0$. 

We begin with some natural estimates for smooth symbols. The notation $\GS_1$ 
is used for the trace class, and $\|\ \cdot\ \|_{\GS_1}$ -- for the trace class norm.

\begin{prop}\label{product:prop}
Let $a, b\in\BS^{(d+1, d+2)}$ 
be some symbols, and suppose that 
$b$ satisfies \eqref{support:eq}. 
Assume that $\a\ell\rho\ge \ell_0$. 
Then for $k = [d/2]+1$:
\begin{gather*}
\|\op_{\a}^{\rm l}(a) \| + 
\|\op_{\a}^{\rm r}(a)\|\le C \SN^{(k, d+1)}(a, \ell, \rho), \\[0.2cm]
\| \op^{\rm l}_\a(a) - \op^{\rm r}_\a(a)\|
\le C(\a\ell\rho)^{-1}\SN^{(k, d+2)}(a, \ell, \rho),
\end{gather*}
and 
\begin{gather}
\|\op^{\rm l}_\a(b)\|_{\GS_1}\le C(\a\ell\rho)^d 
\SN^{(d+1, d+1)}(b; \ell, \rho),\notag\\[0.2cm]
\|\op_\a^{\rm l}(b) - \op_\a^{\rm r}(b)\|_{\GS_1}\le C(\a\ell\rho)^{d-1}
\SN^{(d+1, d+2)}(b; \ell, \rho),\label{lr:eq}\\[0.2cm]
\| \op_\a^{\rm l}(a) \op_\a^{\rm l}(b) - \op_\a^{\rm l}(ab)\|_{\GS_1}\le
C (\a\ell\rho)^{d-1} \SN^{(d+1, d+2)}(a; \ell, \rho)
\SN^{(d+1, d+2)}(b; \ell, \rho).\notag
\end{gather}
\end{prop}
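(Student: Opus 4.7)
The plan is to reduce every bound to the unit-parameter case by a unitary rescaling, and then to appeal to standard pseudodifferential calculus together with Hilbert--Schmidt factorization. Specifically, the map $U_{\ell,\rho}\colon u(\bx)\mapsto \ell^{d/2}\rho^{-d/2}u(\ell\bx/\rho)$ (or its variants) conjugates $\op_\a^{\rm l}(b)$ to a pseudodifferential operator of the same form with symbol $\tilde b(\tilde\bx,\tilde\bxi)=b(\ell\tilde\bx,\rho\tilde\bxi)$ supported in $B(\bz/\ell,1)\times B(\bmu/\rho,1)$ and effective semiclassical parameter $\tilde\a=\a\ell\rho\ge\ell_0$. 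Since $\SN^{(n,m)}(\tilde b;1,1)=\SN^{(n,m)}(b;\ell,\rho)$, it suffices to prove the stated inequalities with $\a=\ell=\rho=1$ and a single large parameter $\tilde\a$.

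With this reduction, the operator-norm bound follows from the Calder\'on--Vaillancourt theorem with $k=[d/2]+1$ derivatives in $\bx$ and enough derivatives in $\bxi$ to control the oscillatory integral. For $\op^{\rm l}_{\tilde\a}(a)-\op^{\rm r}_{\tilde\a}(a)$, I would write the kernel difference as
\[
\Bigl(\frac{\tilde\a}{2\pi}\Bigr)^{d}\iint e^{i\tilde\a(\bx-\by)\bxi}\bigl[a(\bx,\bxi)-a(\by,\bxi)\bigr]\,d\bxi,
\]
Taylor expand $a(\bx,\bxi)-a(\by,\bxi)=(\bx-\by)\cdot\int_0^1\nabla_\bx a(\by+t(\bx-\by),\bxi)\,dt$, and trade the prefactor $(\bx-\by)$ for $(i\tilde\a)^{-1}\nabla_\bxi$ via integration by parts. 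The result is a pseudodifferential operator with one extra $\bxi$-derivative and an overall factor $\tilde\a^{-1}$, to which Calder\'on--Vaillancourt applies again.

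The trace-class bounds exploit compact support. Choose smooth cutoffs $\phi$ and $\psi$ equal to one on $B(\bz/\ell,1)$ and $B(\bmu/\rho,1)$ respectively; then $\op_{\tilde\a}^{\rm l}(b)=\phi(\bx)\op_{\tilde\a}^{\rm l}(b\psi)$, and I would factor this as a product of two Hilbert--Schmidt operators whose kernels have $L^2$-norm $\lesssim \tilde\a^{d/2}\SN^{(d+1,d+1)}(b;1,1)$, after integrating by parts in $\bxi$ and $\bx$ to obtain the decay $\langle\tilde\a(\bx-\by)\rangle^{-N}$ needed for square integrability. Multiplying H--S norms gives the $\GS_1$ bound $\lesssim\tilde\a^d$. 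The estimate on $\op^{\rm l}_{\tilde\a}(b)-\op^{\rm r}_{\tilde\a}(b)$ is obtained by combining the Taylor argument above with this trace-class machinery applied to the (now compactly supported) remainder symbol, gaining the advertised factor $\tilde\a^{-1}$.

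The main obstacle is the composition estimate, since $a$ is not assumed compactly supported while $b$ is. My plan is to write the kernel of $\op^{\rm l}_{\tilde\a}(a)\op^{\rm l}_{\tilde\a}(b)-\op^{\rm l}_{\tilde\a}(ab)$ using the standard representation
\[
\Bigl(\frac{\tilde\a}{2\pi}\Bigr)^{d}\iint e^{i\tilde\a(\bx-\by)\bxi}\bigl[a(\bx,\bxi)-a(\by,\bxi)\bigr]b(\by,\bxi)\,d\by\,d\bxi,
\]
expand $a(\bx,\bxi)-a(\by,\bxi)$ in Taylor form as above, and integrate by parts in $\bxi$ once to extract a factor $\tilde\a^{-1}$. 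The resulting operator equals $\tilde\a^{-1}\op^{\rm r}_{\tilde\a}(c)$ with $c(\by,\bxi)=\int_0^1\nabla_\bx a(\by+t(\bx-\by),\bxi)\cdot\nabla_\bxi b(\by,\bxi)\,dt$ inherited as a remainder symbol whose $\bx$-support is dictated solely by that of $b$; combining the already-proved trace-class bound with a final application of the left/right swap \eqref{lr:eq} to move the $t$-dependence out then produces the stated inequality with the norms $\SN^{(d+1,d+2)}$ of both $a$ and $b$. The delicate point is to keep all bookkeeping so that only the support of $b$ controls the volume factor $(\tilde\a\ell\rho)^{d-1}$, never the support of $a$.
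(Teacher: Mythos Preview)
The paper does not actually prove this proposition: it cites Cordes for the operator-norm bounds and refers to Lemmas~3.10--3.12 and Corollary~3.13 of \cite{Sob} for the remaining trace-class estimates. So your sketch is not to be compared against an argument in the paper but assessed on its own.

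Your rescaling reduction and the treatment of the first four inequalities follow the standard route and are fine. The composition estimate, however, is muddled in several places. First, the amplitude $[a(\bx,\bxi)-a(\by,\bxi)]\,b(\by,\bxi)$ you write down represents $\op^{\rm l}_{\tilde\a}(a)\op^{\rm r}_{\tilde\a}(b)-\op^{\rm r}_{\tilde\a}(ab)$, not $\op^{\rm l}_{\tilde\a}(a)\op^{\rm l}_{\tilde\a}(b)-\op^{\rm l}_{\tilde\a}(ab)$; the discrepancy is two applications of \eqref{lr:eq}, but you should say so explicitly. Second, after the Taylor expansion and one integration by parts in $\bxi$, the derivative $\nabla_{\bxi}$ hits \emph{both} factors, producing a term with $\nabla_{\bxi}\nabla_{\bx}a$ in addition to $\nabla_{\bx}a\cdot\nabla_{\bxi}b$; you record only the latter. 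Third, and most seriously, the resulting object is not $\tilde\a^{-1}\op^{\rm r}_{\tilde\a}(c)$ for any right symbol: your $c$ still depends on $\bx$ through $\nabla_{\bx}a(\by+t(\bx-\by),\bxi)$, so what you have is an \emph{amplitude} operator. The claim that its ``$\bx$-support is dictated solely by that of $b$'' is false, since $b(\by,\bxi)$ constrains $\by$ and $\bxi$, not $\bx$. What actually makes the argument work is that compact support in $(\by,\bxi)$, together with the kernel decay $\langle\tilde\a(\bx-\by)\rangle^{-N}$ obtained from further integration by parts in $\bxi$, already suffices for a $\GS_1$ bound with the correct volume factor. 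You need to state and prove that amplitude-operator version of the trace-class estimate rather than invoke the symbol version you established earlier; once that is done, the sketch goes through.
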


The boundedness of the operators $\op^{\rm l}_\a$, $\op^{\rm r}_\a$ 
is a classical fact, and it can be found, e.g. in \cite{Cordes},
Theorem $B_1'$, where it was established under somewhat 
weaker smoothness assumptions. For the other estimates 
see \cite{Sob}, Lemmas 3.10--3.12 and Corollary 3.13.

\subsection{Bounds for basic domains} 
Theorems 
\ref{main:thm} and \ref{main_inf:thm} 
will be deduced from the asymptotics of 
``local" traces of the form $\tr\bigl(\op^{\rm l}_\a(b) g_p(T_\a(a))\bigr)$,  
$g_p(t) = t^p$, $p=1, 2, \dots$, 
with a compactly supported symbol $b$. 
In this section we concentrate on such ``localized" operators. 
In fact, due to the bound \eqref{lr:eq} it will be unimportant which 
of the operators $\op_\a^{\rm l}(b)$ or $\op_\a^{\rm r}(b)$ is used for 
this localization. Thus we often use the notation $\op_\a(b)$ to denote any of these 
two operators.   

First we obtain some bounds for the case when both domains $\L$ and $\Om$ 
are basic Lipschitz, i.e. $\L = \G(\Phi)$ and $\Om = \G(\Psi)$ with some 
uniformly Lipschitz functions $\Phi$ and $\Psi$. The choice of Cartesian 
coordinates for which $\L$ or $\Om$ have the form \eqref{Lambda:eq} 
is not assumed to be the same for both domains. 
The constants in the estimates below depend only on the 
Lipschitz constants $M_\Phi$, $M_\Psi$ for the functions $\Phi$ and $\Psi$, 
and not on any other properties of the domains.   

First one needs the following commutator estimates.

\begin{prop}\label{sandwich:prop} 
Let $\L$, $\Om$ be basic Lipschitz domains. 
Let the symbol $b\in\BS^{(d+2, d+2)}$ satisfy \eqref{support:eq}. 
Assume that $\a\ell\rho\ge \ell_0$. Then
\begin{equation*}
\|[\op_\a(b), P_{\Om, \a}]\|_{\GS_1} +
\|[\op_\a(b), \chi_\L]\|_{\GS_1}\le C  (\a\ell\rho)^{d-1} 
\SN^{(d+2, d+2)}(b; \ell, \rho).
\end{equation*}
\end{prop}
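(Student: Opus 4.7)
The two commutator estimates will be treated in parallel: first the bound with $\chi_\L$ by a direct argument exploiting the graph representation $\L = \G(\Phi)$, and then the bound with $P_{\Om, \a}$ by conjugation with the semiclassical Fourier transform.

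For $[\op^{\rm l}_\a(b), \chi_\L]$ I would use $\chi_\L(\bx) = H(x_d - \Phi(\hat\bx))$ and regularize the Heaviside $H$ by convolution with a smooth mollifier at scale $\varepsilon = c(\a\rho)^{-1}$, producing $\chi_\L^\varepsilon\in\plainC{\infty}$, and split
\begin{equation*}
[\op^{\rm l}_\a(b), \chi_\L] = [\op^{\rm l}_\a(b), \chi_\L^\varepsilon] + [\op^{\rm l}_\a(b), \chi_\L - \chi_\L^\varepsilon].
\end{equation*}
For the first term, viewing $M_{\chi_\L^\varepsilon} = \op^{\rm l}_\a(\chi_\L^\varepsilon)$ and applying the composition estimate in Proposition \ref{product:prop} to both orderings, the commutator reduces modulo a $\GS_1$-error of order $(\a\ell\rho)^{d-1}\SN^{(d+1,d+2)}(b;\ell,\rho)$ to $\a^{-1}\op^{\rm l}_\a(\nabla_\bxi b\cdot\nabla_\bx\chi_\L^\varepsilon)$, whose symbol is supported in an $\varepsilon$-layer around $\p\L$, is bounded by $\rho^{-1}\varepsilon^{-1}\SN^{(0,1)}(b;\ell,\rho)$, and therefore has trace norm $\lesssim \a^{d-1}\ell^{d-1}\rho^{d-1}\SN^{(d+1,d+2)}(b;\ell,\rho)$. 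For the remainder, $\chi_\L - \chi_\L^\varepsilon$ is a bounded function supported in an $\varepsilon$-layer around $\p\L$, and the trace norms of $\op^{\rm l}_\a(b)(\chi_\L-\chi_\L^\varepsilon)$ and $(\chi_\L-\chi_\L^\varepsilon)\op^{\rm l}_\a(b)$ are controlled by the thin-layer trace-class bounds for PDOs near Lipschitz graphs established in \cite{Sob_2013}, with constants depending only on $M_\Phi$. The choice $\varepsilon\sim(\a\rho)^{-1}$ balances the two contributions and yields the claimed inequality. The same argument works for $\op^{\rm r}_\a(b)$, and \eqref{lr:eq} justifies writing $\op_\a(b)$ for either quantization.

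To handle $[\op_\a(b), P_{\Om, \a}]$, I would conjugate with the unitary semiclassical Fourier transform $\CF_\a u(\bxi) = (\a/(2\pi))^{d/2}\int e^{-i\a\bx\bxi} u(\bx)\,d\bx$. A direct computation gives
\begin{equation*}
\CF_\a P_{\Om, \a}\CF_\a^{-1} = M_{\chi_\Om},\qquad \CF_\a \op^{\rm l}_\a(b) \CF_\a^{-1} = \op^{\rm r}_\a(\tilde b),
\end{equation*}
where $\tilde b(\boldeta,\bx) = b(-\bx,\boldeta)$ is compactly supported with the roles of $\ell$ and $\rho$ (and of $\bz$, $\bmu$) interchanged. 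Since $\CF_\a$ is isometric on $\GS_1$ and $\Om = \G(\Psi)$ is basic Lipschitz, applying the first commutator estimate to $[\op^{\rm r}_\a(\tilde b), \chi_\Om]$ with parameters $(\rho,\ell)$ yields
\begin{equation*}
\|[\op_\a(b), P_{\Om, \a}]\|_{\GS_1} \le C (\a\rho\ell)^{d-1} \SN^{(d+2,d+2)}(\tilde b;\rho,\ell) = C (\a\ell\rho)^{d-1}\SN^{(d+2,d+2)}(b;\ell,\rho),
\end{equation*}
the last equality following from the symmetry of $\SN^{(n,n)}$ under the swap of variables.

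The main obstacle is the Lipschitz (rather than $\plainC{1}$) regularity of the boundary. For a $\plainC{1}$ graph one could locally flatten $\p\L$ by a $\plainC{1}$-diffeomorphism and conjugate the smooth PDO calculus through it, which is essentially the strategy of \cite{Sob}. For merely Lipschitz $\Phi$ this flattening destroys the PDO structure, and one must instead estimate commutator errors directly and balance the smoothed-symbol contribution against the thin-layer remainder, with constants that are uniform in $\varepsilon$ and depend only on $M_\Phi$ (and, after Fourier duality, on $M_\Psi$) rather than on higher-order regularity. This uniform dependence is precisely what is provided by the trace bounds of \cite{Sob_2013} and is what makes the estimate available in the present Lipschitz generality.
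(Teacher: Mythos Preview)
The paper does not prove this proposition at all; it imports it wholesale from \cite{Sob_2013}, Remark~4.3. So there is no paper-proof to compare your argument against, only the strategy of the cited reference.

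Your Fourier-conjugation reduction of the $P_{\Om,\a}$ commutator to a $\chi_\Om$ commutator is correct and is indeed the symmetry exploited in \cite{Sob} and \cite{Sob_2013}; that half of the proposal is fine.

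The mollification argument for $[\op_\a(b),\chi_\L]$, however, has a genuine gap. The composition estimate in Proposition~\ref{product:prop} that you invoke reads
\[
\|\op_\a^{\rm l}(a)\op_\a^{\rm l}(c) - \op_\a^{\rm l}(ac)\|_{\GS_1}
\le C(\a\ell\rho)^{d-1}\,\SN^{(d+1,d+2)}(a;\ell,\rho)\,\SN^{(d+1,d+2)}(c;\ell,\rho),
\]
so the bound depends on the norms of \emph{both} symbols at the \emph{same} scales $(\ell,\rho)$. Taking $c=\chi_\L^\varepsilon$ gives $\SN^{(d+1,0)}(\chi_\L^\varepsilon;\ell,\rho)\sim(\ell/\varepsilon)^{d+1}$, and with your choice $\varepsilon\sim(\a\rho)^{-1}$ this factor is of order $(\a\ell\rho)^{d+1}$, which destroys the estimate. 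In addition, Proposition~\ref{product:prop} furnishes only a zeroth-order product formula; it does not yield the first-order commutator expansion you assert (reduction to $\a^{-1}\op_\a^{\rm l}(\nabla_\bxi b\cdot\nabla_\bx\chi_\L^\varepsilon)$ with a controlled remainder). To make a mollification-plus-balancing argument work one needs a commutator bound whose error sees only a single derivative of $\chi_\L^\varepsilon$, and that is not available from the tools you cite in this paper. Finally, for the remainder term you already appeal to ``thin-layer trace-class bounds \dots\ established in \cite{Sob_2013}''; since the full commutator estimate is exactly what \cite{Sob_2013} proves, your argument becomes circular rather than a reduction to more elementary inputs. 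In short, the outline is plausible in spirit but does not stand on the results available in the present paper, and ultimately rests on the same external reference the paper cites.
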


See \cite{Sob_2013}, Remark 4.3. 

Using these commutator estimates we can now reduce the problem
to the operator $T_\a(1)$. 

\begin{lem}\label{through:lem}
Let 
each of the domains $\L$ and $\Om$ be either 
a basic Lipschitz domain, or $\R^d$. 
Let $a, b\in\BS^{(d+2, d+2)}$,  
and assume that $b$ satisfies 
\eqref{support:eq}. 
Let $\a\ell\rho\ge \ell_0$. 
Then 
\begin{align}\label{through:eq}
\| \op_\a(b) g_p\bigl(T_\a(a)\bigr) 
- &\ \op_\a(a^p b) g_p\bigl(T_\a(1)\bigr)\|_{\GS_1}\notag\\[0.2cm]
\le &\ 
 C_p(\a\ell\rho)^{d-1} 
\SN^{(d+2, d+2)}(b; \ell, \rho)\bigl(\SN^{(d+2, d+2)}(a; \ell, \rho)\bigr)^p,
\end{align}
for any $p = 1, 2, \dots$. 
The same bound holds if one replaces $T_\a$ with $S_\a$. 
\end{lem}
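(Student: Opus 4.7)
The plan is to push $\op_\a(b)$ through the polynomial $T_\a(a)^p$, absorbing each interior factor of $\op^{\rm l}_\a(a)$ into the symbol via the product estimate of Proposition \ref{product:prop}, and then return the resulting $\op_\a(a^p b)$ to the leftmost position. To organize the bookkeeping I would set $\Pi := \chi_\L P_{\Om,\a}$, $V := \Pi^*\Pi = P_{\Om,\a}\chi_\L P_{\Om,\a}$, and $A := \op^{\rm l}_\a(a)$. Since $P_{\Om,\a}^2 = P_{\Om,\a}$ and $\chi_\L^2 = \chi_\L$, a direct expansion gives
$$T_\a(a) = \Pi A\Pi^*, \qquad T_\a(a)^p = \Pi A(VA)^{p-1}\Pi^*, \qquad T_\a(1)^p = \Pi V^{p-1}\Pi^*.$$
Note that $\|\Pi\|, \|\Pi^*\|, \|V\| \le 1$ and, by Proposition \ref{product:prop}, $\|A\| \le C\,\SN^{(d+2,d+2)}(a;\ell,\rho)$.

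First I would commute $\op_\a(b)$ past the initial $\Pi$ using Proposition \ref{sandwich:prop} (applied to $[\op_\a(b),\chi_\L]$ and $[\op_\a(b),P_{\Om,\a}]$), and then merge $\op_\a(b)A$ into $\op_\a(ab)$ by Proposition \ref{product:prop}. This reduces the expression to $\Pi\,\op_\a(ab)\,(VA)^{p-1}\Pi^*$ modulo a $\GS_1$ error of the required form. Next I would iterate: for each $j = 1,\ldots,p-1$, commute $\op_\a(a^j b)$ past the $j$-th copy of $V$ (three commutators per step by Proposition \ref{sandwich:prop}), then merge with the next copy of $A$ using Proposition \ref{product:prop}. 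Since $a^j b$ retains the compact support of $b$, the Leibniz rule yields
$$\SN^{(d+2,d+2)}(a^j b;\ell,\rho) \le C_j\,\SN^{(d+2,d+2)}(a;\ell,\rho)^j\,\SN^{(d+2,d+2)}(b;\ell,\rho),$$
so the commutator and product bounds apply at every step with the required control. After $p-1$ rounds the expression becomes $\Pi V^{p-1}\op_\a(a^p b)\,\Pi^*$, and a final $O(p)$ applications of Proposition \ref{sandwich:prop} commuting $\op_\a(a^p b)$ past the remaining factors of $\chi_\L$ and $P_{\Om,\a}$ inside $\Pi V^{p-1}$ deliver $\op_\a(a^p b)\,T_\a(1)^p$ with a cumulative $\GS_1$ error of size $C_p(\a\ell\rho)^{d-1}\SN^{(d+2,d+2)}(b;\ell,\rho)\,\SN^{(d+2,d+2)}(a;\ell,\rho)^p$.

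For $S_\a(a)$ the argument is identical once one writes $\re A = \tfrac12(\op^{\rm l}_\a(a) + \op^{\rm r}_\a(\bar a))$ and invokes the $\op^{\rm r}$-versions of Propositions \ref{product:prop} and \ref{sandwich:prop}, together with \eqref{lr:eq} to interchange the two quantizations where needed. The main obstacle is purely the error accounting: at each commutation or merger one must verify that (i) the operator norms of the uncontrolled factors on either side of the $\GS_1$ remainder are uniformly bounded (which holds thanks to $\|\Pi\|, \|\Pi^*\|, \|V\| \le 1$ and $\|A\| \le C\SN^{(d+2,d+2)}(a;\ell,\rho)$), and (ii) the Leibniz-type bound on $\SN^{(d+2,d+2)}(a^j b;\ell,\rho)$ keeps the accumulated constant $C_p$ finite and independent of $\a,\ell,\rho,\L,\Om$.
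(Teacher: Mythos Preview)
Your proposal is correct and follows essentially the same strategy as the paper: push a compactly supported symbol through $T_\a(a)^p$ using the commutator bounds of Proposition~\ref{sandwich:prop} for $\chi_\L$ and $P_{\Om,\a}$, and the product estimate of Proposition~\ref{product:prop} to absorb each factor $\op_\a^{\rm l}(a)$. The only organisational difference is that the paper first introduces auxiliary cut-offs $\zeta(\bx),\eta(\bxi)$ with $b\zeta\eta=b$, writes $b=b(\zeta\eta)^p$, and commutes the $p$ copies of $\op_\a(\zeta\eta)$ into $T_\a(a)^p$ to replace $a$ by the compactly supported $\zeta\eta a$, before commuting everything back to the left; you instead propagate $\op_\a(a^jb)$ directly, which is equally valid since $a^jb$ already inherits compact support from $b$.
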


\begin{proof}
Without loss of generality assume that $\bz = \bmu = \bold0$ and 
that the $\SN$-norms on the right-hand side of \eqref{through:eq} equal $1$. 
Let $\z, \eta\in 
\plainC\infty_0(\R^{d})$ 
be functions $\z = \z(\bx), \eta = \eta(\bxi)$ supported in the  balls 
$B(\bold0, 2\ell)$ and $B(\bold0, 2\rho)$ respectively 
such that $b \z \eta = b$, 
and such that 
\begin{equation*}
\ell^n |\nabla_\bx^n\z(\bx)| + 
\rho^n|\nabla_{\bxi}^n\eta(\bxi)|\le \tilde C_n,\ n = 0, 1, \dots.
\end{equation*}
Represent $b = b (\z\eta)^p$ and commute the symbol $(\z(\bx)\eta(\bxi))^p$ 
to the right 
using repeatedly Propositions \ref{product:prop} 
and \ref{sandwich:prop}:  
\begin{equation*}
\| \op_\a (b) 
g_p(T_\a(a)) - \op_\a (b) g_p(T_\a(\z \eta a))
\|_{\GS_1}\le C_p (\a\ell\rho)^{d-1}. 
\end{equation*} 
The same bound holds if $T_\a$ is replaced with $S_\a$. 
Now, commuting $\z\eta a$ to the left, with the help of 
Propositions \ref{product:prop} and \ref{sandwich:prop} again 
we arrive at \eqref{through:eq}. 
\end{proof}

\begin{prop}\label{HS:prop} 
Let $\L$, $\Om$ be basic Lipschitz domains. 
Suppose that the symbol $b\in\BS^{(d+2, d+2)}$ satisfies \eqref{support:eq}, 
and that $\a\ell\rho\ge 2$. Then 
\begin{equation}\label{HSestim:eq}
\|\chi_{\L} \op_\a (b) P_{\Om, \a} (1-\chi_{\L})\|_{\GS_1}
\le C(\a\ell\rho)^{d-1} \log(\a\ell\rho)
 \SN^{(d+2, d+2)}(b; \ell, \rho) . 
\end{equation}
\end{prop}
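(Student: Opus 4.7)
The plan is to reduce the trace-class bound to two commutator estimates using the elementary identity $\chi_\L(1-\chi_\L)=0$. Setting $A=\op_\a(b)P_{\Om,\a}$ and writing $\chi_\L A = \chi_\L A \chi_\L + \chi_\L A(1-\chi_\L)$, I obtain
$$\chi_\L \op_\a(b) P_{\Om,\a}(1-\chi_\L) = -\chi_\L\,[\op_\a(b) P_{\Om,\a},\,\chi_\L],$$
so it is enough to bound $\|[\op_\a(b) P_{\Om,\a},\chi_\L]\|_{\GS_1}$ by the right-hand side of \eqref{HSestim:eq}. By the Leibniz rule,
$$[\op_\a(b) P_{\Om,\a},\,\chi_\L] = [\op_\a(b),\chi_\L]\,P_{\Om,\a} + \op_\a(b)\,[P_{\Om,\a},\chi_\L].$$
The first summand is handled immediately by Proposition \ref{sandwich:prop} together with $\|P_{\Om,\a}\|\le 1$, yielding the bound $C(\a\ell\rho)^{d-1}\SN^{(d+2,d+2)}(b;\ell,\rho)$ \emph{without} any logarithm.

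The whole content of the proposition therefore lies in proving
$$\|\op_\a(b)\,[P_{\Om,\a},\chi_\L]\|_{\GS_1}\le C(\a\ell\rho)^{d-1}\log(\a\ell\rho)\,\SN^{(d+2,d+2)}(b;\ell,\rho).$$
For this I would first straighten $\partial\L$ by the bi-Lipschitz change of variables $(\hat\bx,x_d)\mapsto(\hat\bx,x_d-\Phi(\hat\bx))$ supplied by the basic Lipschitz representation $\L=\G(\Phi)$, reducing to the case when $\chi_\L$ is the characteristic function of a half-space. This move changes $\op_\a(b)$ into an operator of the same class and preserves trace norms up to factors depending only on $M_\Phi$; the framework is exactly that of \cite{Sob_2013}. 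I would then decompose $b$ dyadically with respect to the distance to the flattened boundary, $b=\sum_k\beta_k$ with $\beta_k$ supported in slabs of width $\sim 2^{k}/\a$, and bound $\|\op_\a(\beta_k)[P_{\Om,\a},\chi_\L]\|_{\GS_2}$ by writing the kernel of $[P_{\Om,\a},\chi_\L]$ explicitly via the Fourier transform of $\chi_\Om$. Summing these Hilbert--Schmidt bounds over the $O(\log(\a\ell\rho))$ relevant dyadic scales produces the logarithmic factor.

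\textbf{Main obstacle.} The real difficulty is the simultaneous discontinuity of $\chi_\L$ in $\bx$ and $\chi_\Om$ in $\bxi$: this interaction is precisely what forces the logarithmic enhancement over the pure commutator bound of Proposition \ref{sandwich:prop}, and it cannot be captured by smooth symbol calculus. The only mechanism for accumulating the logarithm is the geometric sum of equal contributions from all phase-space scales between $1$ and $\a\ell\rho$, and carrying this through with merely Lipschitz regularity of $\partial\L$ (rather than $\plainC1$) requires the non-smooth kernel estimates of \cite{Sob_2013} to survive the bi-Lipschitz straightening.
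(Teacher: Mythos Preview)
The paper does not prove this proposition at all: immediately after the statement it simply writes ``See \cite{Sob_2013}, Theorem 4.6.'' There is therefore no in-paper argument to compare against; the result is imported wholesale.

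Your outline is a reasonable sketch of how such a bound is obtained, and in particular the reduction via $\chi_\L A(1-\chi_\L)=-\chi_\L[A,\chi_\L]$ and the Leibniz splitting into the harmless commutator $[\op_\a(b),\chi_\L]P_{\Om,\a}$ plus the genuinely logarithmic piece $\op_\a(b)[P_{\Om,\a},\chi_\L]$ is exactly right. Where your plan becomes thin is the second piece: the bi-Lipschitz straightening $(\hat\bx,x_d)\mapsto(\hat\bx,x_d-\Phi(\hat\bx))$ does turn $\chi_\L$ into a half-space indicator, but it conjugates $P_{\Om,\a}$ into an operator that is no longer a Fourier multiplier, and your claim that the transformed $\op_\a(b)$ stays ``of the same class'' needs justification when $\Phi$ is only Lipschitz (so the Jacobian is merely $L^\infty$, not smooth). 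The actual argument in \cite{Sob_2013} handles this carefully, and since you already acknowledge that your dyadic Hilbert--Schmidt step relies on ``the non-smooth kernel estimates of \cite{Sob_2013}'', you are in effect citing the same source the paper cites. So your proposal is not so much an alternative proof as a correct identification of the mechanism (two boundaries interacting across $O(\log(\a\ell\rho))$ dyadic scales) together with a pointer to where the hard work is done.
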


See \cite{Sob_2013}, Theorem 4.6.

Here is a useful consequence of the above bound:

\begin{cor}
Under the conditions of Proposition 
\ref{HS:prop},
\begin{equation}\label{nucl:eq}
\|\op_\a(b) T_\a(1)\bigl(
I - T_\a(1)\bigr)\|_{\GS_1}\le C(\a\ell\rho)^{d-1} \log(\a\ell\rho)\ 
\SN^{(d+2, d+2)}(b; \ell, \rho).
\end{equation} 
\end{cor}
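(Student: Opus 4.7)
The plan is to reduce $T_\a(1)(I - T_\a(1))$, via the orthogonal projection structure of $\chi_\L$ and $P_{\Om,\a}$, to an expression for which the sandwich bound \eqref{HSestim:eq} applies directly, and then to accommodate the insertion of $\op_\a(b)$ with a single commutator.

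First I would use that $\chi_\Om$ is a characteristic function, so that $P_{\Om,\a} = \op_\a(\chi_\Om)$ is (up to rescaling) a Fourier multiplier by an indicator function and hence an orthogonal projection in $\plainL2(\R^d)$; in particular $P_{\Om,\a}^2 = P_{\Om,\a}$. Combined with $\op_\a^{\rm l}(1) = I$ this gives $T_\a(1) = \chi_\L P_{\Om,\a}\chi_\L$, and expanding $T_\a(1)^2$ while exploiting $\chi_\L^2 = \chi_\L$ and $P_{\Om,\a}^2 = P_{\Om,\a}$ yields the key algebraic identity
\[
T_\a(1)\bigl(I - T_\a(1)\bigr) = \chi_\L P_{\Om,\a}(1 - \chi_\L) P_{\Om,\a}\chi_\L.
\]

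Next I would move $\op_\a(b)$ past the leftmost $\chi_\L$ by one commutator, obtaining
\[
\op_\a(b)T_\a(1)\bigl(I - T_\a(1)\bigr) = \chi_\L \op_\a(b) P_{\Om,\a}(1 - \chi_\L) P_{\Om,\a}\chi_\L + [\op_\a(b),\chi_\L] P_{\Om,\a}(1 - \chi_\L) P_{\Om,\a}\chi_\L.
\]
The commutator term is a harmless remainder: Proposition \ref{sandwich:prop} bounds $\|[\op_\a(b),\chi_\L]\|_{\GS_1}$ by $C(\a\ell\rho)^{d-1}\SN^{(d+2, d+2)}(b;\ell,\rho)$, and the remaining factor $P_{\Om,\a}(1-\chi_\L)P_{\Om,\a}\chi_\L$ is a product of operators of norm at most $1$, so this summand sits below the right-hand side of \eqref{nucl:eq} with the logarithm to spare.

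For the principal term I would factor
\[
\chi_\L \op_\a(b) P_{\Om,\a}(1 - \chi_\L) P_{\Om,\a}\chi_\L = \bigl[\chi_\L \op_\a(b) P_{\Om,\a}(1 - \chi_\L)\bigr]\cdot\bigl[P_{\Om,\a}\chi_\L\bigr],
\]
apply Proposition \ref{HS:prop} to the bracketed trace-class factor, which produces exactly the bound $C(\a\ell\rho)^{d-1}\log(\a\ell\rho)\SN^{(d+2, d+2)}(b;\ell,\rho)$, and bound the second factor by its operator norm, which is at most $1$. Adding the two contributions yields \eqref{nucl:eq}. The one non-mechanical step is the algebraic projection identity in the first paragraph; once it is in place, the rest is a direct assembly of Propositions \ref{sandwich:prop} and \ref{HS:prop}.
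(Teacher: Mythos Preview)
Your proof is correct and follows essentially the same route as the paper's: the identical algebraic identity $T_\a(1)(I-T_\a(1)) = \chi_\L P_{\Om,\a}(1-\chi_\L)P_{\Om,\a}\chi_\L$, then a single commutator with $\chi_\L$ to split off a term handled by Proposition~\ref{sandwich:prop}, with the remaining term bounded via Proposition~\ref{HS:prop} and the trivial operator-norm estimate on the trailing $P_{\Om,\a}\chi_\L$. The paper's write-up is simply more compressed, stating the resulting inequality directly.
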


\begin{proof} 
Without loss of generality assume that $\SN^{(d+2, d+2)}(b; \ell, \rho)=1$. 
Calculate:
\begin{equation*}
T_\a(1)\bigl(I-T_\a(1)\bigr) 
= \chi_\L P_{\Om, \a}\bigl(1-\chi_\L\bigr)P_{\Om, \a}\chi_\L,
\end{equation*}
so that 
\begin{equation*}
\|\op_\a(b) T_\a(1)\bigl(
I - T_\a(1)\bigr)\|_{\GS_1}
\le \| [\op_\a(b), \chi_\L]\|_{\GS_1}
+ \| \chi_\L \op_\a(b)P_{\Om, \a} (1-\chi_\L)\|_{\GS_1}. 
\end{equation*}
Propositions \ref{sandwich:prop} and \ref{HS:prop} 
lead to \eqref{nucl:eq}.  
\end{proof}

\subsection{Bounds and asymptotics for more general domains} 
The next group of results expresses the fact that 
the local asymptotics are 
determined by local properties of the boundaries $\p\L, \p\Om$.
This is the key idea in the proof of Theorem \ref{main:thm}. 
Let $\L$, $\Om$ and $\L_0, \Om_0$ be two pairs of domains such 
that each of $\L_0, \Om_0$ is either 
\begin{enumerate}
\item 
a basic Lipschitz domain, or
\item
the entire space $\R^d$, or
\item
the empty set. 
\end{enumerate}
Suppose that   
\begin{equation}\label{localdom:eq}
\L\cap B(\bz, \ell) = \L_0\cap B(\bz, \ell),\ \ 
\Om\cap B(\bmu, \rho) = \Om_0\cap B(\bmu, \rho).
\end{equation}
%
The next localization result is crucial.

\begin{lem}\label{localization:lem}
Let $a, b\in\BS^{(d+2, d+2)}$, and let $b$ satisfy \eqref{support:eq}. 
Suppose that the domains $\L$, $\Om$ and $\L_0, \Om_0$  
are as specified above, and that $\a\ell\rho\ge \ell_0$. Then 
\begin{equation}\label{localcom:eq}
\|[\op_\a(b), P_{\Om, \a}]\|_{\GS_1} +
\|[\op_\a(b), \chi_\L]\|_{\GS_1}\le C  (\a\ell\rho)^{d-1} 
\SN^{(d+2, d+2)}(b; \ell, \rho),
\end{equation}
and 
\begin{align}\label{localization:eq}
\| \op_\a(b)\bigl( 
g_p(T_\a(a; \L, \Om))
- &\ g_p(T_\a(a; \L_0, \Om_0))\bigr)\|_{\GS_1}\notag\\[0.2cm]
\le &\ C(\a\ell\rho)^{d-1}
\SN^{(d+2, d+2)}(b; \ell, \rho)\bigl(\SN^{(d+2, d+2)}(a; \ell, \rho)\bigr)^p.
\end{align}
The same bound holds if $T_\a$ is replaced with $S_\a$. 
\end{lem}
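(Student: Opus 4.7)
The strategy is to exploit the phase-space localization of $\op_\a(b)$ in $B(\bz,\ell)\times B(\bmu,\rho)$: by hypothesis $\L$ and $\L_0$ (resp.\ $\Om$ and $\Om_0$) coincide on this window, so $\op_\a(b)$ should not distinguish between the two configurations beyond $\GS_1$-errors of the claimed order $(\a\ell\rho)^{d-1}$. I would first prove the commutator bound \eqref{localcom:eq}, and then use it together with a telescoping argument to establish \eqref{localization:eq}.

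For \eqref{localcom:eq}, write $\chi_\L=\chi_{\L_0}+g_\L$ with $g_\L:=\chi_\L-\chi_{\L_0}$, which vanishes on $B(\bz,\ell)$ by hypothesis. Proposition \ref{sandwich:prop} controls $\|[\op_\a(b),\chi_{\L_0}]\|_{\GS_1}$ (and $\chi_{\L_0}\in\{0,1\}$ trivially in the degenerate cases $\L_0=\emptyset$ or $\R^d$). For the remainder $[\op_\a(b),g_\L]$, note that the Schwartz kernel of $\op_\a^{\rm l}(b)$ is supported in $\bx\in B(\bz,\ell)$, so $g_\L\op_\a^{\rm l}(b)=0$ and the commutator collapses to the single operator $\op_\a^{\rm l}(b)g_\L$, whose kernel lives in the off-diagonal region $\bx\in B(\bz,\ell)$, $\by\notin B(\bz,\ell)$ and is rapidly oscillating in $\bxi$. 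The trace-class control of such off-diagonal pieces follows from the non-stationary-phase kernel analysis of \cite{Sob_2013}, and crucially does not carry the $\log$-factor of Proposition \ref{HS:prop}, as only one sharp cutoff is present. The case of $\op_\a^{\rm r}(b)$ is handled symmetrically, and the commutator with $P_{\Om,\a}=\op_\a(\chi_\Om)$ is treated by the same argument after a Fourier transform, using that $\chi_\Om-\chi_{\Om_0}$ vanishes on $B(\bmu,\rho)$.

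For \eqref{localization:eq}, set $T:=T_\a(a;\L,\Om)$ and $T_0:=T_\a(a;\L_0,\Om_0)$ and use the telescoping identity
\[
T^p-T_0^p=\sum_{j=0}^{p-1}T^j(T-T_0)T_0^{p-1-j}.
\]
Both $T$ and $T_0$ are bounded in operator norm (Proposition \ref{product:prop}), so it suffices to estimate $\|\op_\a(b)T^j(T-T_0)T_0^{p-1-j}\|_{\GS_1}$. I would commute $\op_\a(b)$ through the factors in $T^j$ using \eqref{localcom:eq} (together with Proposition \ref{product:prop} to move across the $\op_\a^{\rm l}(a)$ inside each $T$), accumulating $\GS_1$-errors of the required order $(\a\ell\rho)^{d-1}$ times powers of $\SN^{(d+2,d+2)}(a;\ell,\rho)$. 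One is then reduced to bounding $\op_\a(b)(T-T_0)$, and the expansion
\[
T-T_0=(\chi_\L-\chi_{\L_0})P_\Om A P_\Om\chi_\L+\chi_{\L_0}(P_\Om-P_{\Om_0})AP_\Om\chi_\L
\]
\[
\qquad+\chi_{\L_0}P_{\Om_0}A(P_\Om-P_{\Om_0})\chi_\L+\chi_{\L_0}P_{\Om_0}AP_{\Om_0}(\chi_\L-\chi_{\L_0}),
\]
with $A=\op_\a^{\rm l}(a)$, reduces matters to bounding $\op_\a(b)$ applied to one of the defect factors $\chi_\L-\chi_{\L_0}$ or $P_\Om-P_{\Om_0}$ — exactly the estimates established in the previous paragraph. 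The case of $S_\a$ is identical since $\re\op_\a^{\rm l}(a)=\tfrac12(\op_\a^{\rm l}(a)+\op_\a^{\rm r}(\overline a))$ is amenable to the same composition and commutator calculus.

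The main technical difficulty is the off-diagonal trace-class bound $\|\op_\a^{\rm l}(b)g\|_{\GS_1}\le C(\a\ell\rho)^{d-1}\,\SN^{(d+2,d+2)}(b;\ell,\rho)\|g\|_\infty$ for bounded $g$ vanishing on $B(\bz,\ell)$. Since $\|\op_\a^{\rm l}(b)\|_{\GS_1}$ is only of order $(\a\ell\rho)^d$, one must genuinely extract a factor $(\a\ell\rho)^{-1}$ from the disjointness of supports; this is precisely what the sharp-cutoff analysis of \cite{Sob_2013} provides. The remaining bookkeeping — telescoping, commuting $\op_\a(b)$ across each $T$, and handling $S_\a$ — follows the pattern already used in the proof of Lemma \ref{through:lem}.
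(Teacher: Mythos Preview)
Your approach is correct, but what you flag as the ``main technical difficulty'' is in fact immediate from the tools already stated in the paper. Since the kernel of $\op_\a^{\rm r}(b)$ is supported in $\by\in B(\bz,\ell)$, one has $\op_\a^{\rm r}(b)g_\L=0$ exactly, so $\op_\a^{\rm l}(b)g_\L=(\op_\a^{\rm l}(b)-\op_\a^{\rm r}(b))g_\L$ and the off-diagonal bound follows at once from \eqref{lr:eq}; no separate appeal to the sharp-cutoff analysis of \cite{Sob_2013} is needed. The $\Om$-analogue is even simpler: since $\op_\a^{\rm l}(b)\op_\a(\psi)=\op_\a^{\rm l}(b\psi)$ for any $\psi=\psi(\bxi)$, and $b(\chi_\Om-\chi_{\Om_0})\equiv 0$, the difference $\op_\a^{\rm l}(b)(P_{\Om,\a}-P_{\Om_0,\a})$ vanishes identically. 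With these observations your telescoping argument for \eqref{localization:eq} goes through.

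The paper organises the same ingredients more economically. Rather than telescoping $T^p-T_0^p$ and expanding $T-T_0$ into four terms, it records the intertwining relations $\op_\a(b)\chi_\L\thicksim\chi_{\L_0}\op_\a(b)$ and $\op_\a(b)P_{\Om,\a}\thicksim P_{\Om_0,\a}\op_\a(b)$ (obtained from the same left/right switch plus Proposition~\ref{sandwich:prop}), and then simply passes $\op_\a(b)$ from the left of $(T_\a(a;\L,\Om))^p$ through to the right, converting each $\chi_\L$ to $\chi_{\L_0}$ and each $P_{\Om,\a}$ to $P_{\Om_0,\a}$ along the way; one sweep yields $(T_\a(a;\L_0,\Om_0))^p\op_\a(b)$, and sweeping back to the left gives \eqref{localization:eq} directly. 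This buys a shorter proof with no case analysis on $T-T_0$; your route has the minor advantage of making the defect $T-T_0$ explicit.
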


For $\plainC1$-domains $\L, \Om$ estimates of this type  were established 
in \cite{Sob}, Section 7. 
Generalization to the Lipschitz domains is quite straightforward  
and we present a proof here  
for the sake of completeness.

\begin{proof}[Proof of Lemma \ref{localization:lem}]
Without loss of generality assume that 
the both $\SN$-norms on the right-hand sides of 
\eqref{localcom:eq} and \eqref{localization:eq} 
equal $1$. 
For any two operators $A_1$ and $A_2$ 
we write $A_1\thicksim A_2$ if $\|A_1-A_2\|_{\GS_1}\le C(\a\ell\rho)^{d-1}$, 
with a constant $C$ independent of $\a, \ell, \rho$.

Assume that $\L_0$, $\Om_0$ are basic Lipschitz domains. 
The following relations are consequences of \eqref{lr:eq} 
and  Proposition \ref{sandwich:prop}:
\begin{equation}\label{lambdacom:eq}
\op_\a(b) \chi_\L \thicksim 
\op_\a^{\rm r}(b)\chi_{\L_0}
\thicksim \chi_{\L_0}\op_\a^{\rm r}(b)
\thicksim \chi_{\L_0}\op_\a(b).
\end{equation} 
Taking the adjoints we also get 
$\chi_\L\op_\a(b)\thicksim \op_\a(b)\chi_{\L_0}$. In the same way 
one obtains similar relations for $P_{\Om, \a}$:
\begin{equation}\label{omegacom:eq}
\op_\a(b) P_{\Omega, \a}
\thicksim P_{\Om_0, \a}\op_\a(b),\ 
 P_{\Omega, \a} \op_\a(b)
\thicksim \op_\a(b) P_{\Om_0, \a}. 
\end{equation}  
Thus by Proposition \ref{sandwich:prop},
\begin{equation*}
\begin{cases}
[\op_\a(b), \chi_\L] \thicksim [\op_{\a}(b), \chi_{\L_0}] \thicksim 0,\\[0.2cm]
[\op_\a(b), P_{\Om, \a}] \thicksim [\op_{\a}(b), P_{\Om_0, \a}] \thicksim 0.
\end{cases}
\end{equation*}
If $\L_0$ or $\Om_0$  
are either $\R^d$ or $\varnothing$, then the above relations hold 
for trivial reasons. 
This proves \eqref{localcom:eq}. 

Applying repeatedly 
the relations \eqref{lambdacom:eq} and \eqref{omegacom:eq} 
in combination with Proposition \ref{product:prop} we arrive at 
\begin{equation*}
\op_\a(b) \bigl(T_\a(a; \L, \Om)\bigr)^p 
\thicksim \bigl(T_\a(a; \L_0, \Om_0)\bigr)^p \op_\a(b)
\thicksim \op_\a(b)\bigl(T_\a(a; \L_0, \Om_0)\bigr)^p.
\end{equation*}
This relation coincides with \eqref{localization:eq}. 

The same argument leads to the bound of the form 
\eqref{localization:eq} for the operator $S_\a$.
\end{proof} 

\begin{lem}\label{localasymptotics:lem}
Let $a, b\in\BS^{(d+2, d+2)}$,  
and assume that $b$ satisfies 
\eqref{support:eq}. 
Let $\a\ell\rho\ge \ell_0$. 
Suppose that $\L$ and $\Om$ satisfy 
\eqref{localdom:eq}, and 
one of the following two conditions is satisfied:
\begin{enumerate}
\item
$\L_0 = \varnothing$ or $\L_0 = \R^d$,
\item
$\Om_0 = \varnothing$ or $\Om_0 = \R^d$.
\end{enumerate}
Then 
\begin{align}\label{asymptotics1:eq}
\bigl|\tr\bigl(\op_\a(b) g_p(T_\a(a))\bigr)
- &\ \a^d \GW_0(b g_p(a))|\notag\\[0.2cm]
\le &\ C_p(\a\ell\rho)^{d-1} 
\SN^{(d+2, d+2)}(b; \ell, \rho)\bigl(\SN^{(d+2, d+2)}(a; \ell, \rho)\bigr)^p.
\end{align}
 \end{lem}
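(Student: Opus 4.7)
The plan is to reduce the left-hand side of \eqref{asymptotics1:eq} to a trace over an orthogonal projection (either $P_{\Om_0,\a}$ or $\chi_{\L_0}$) and then evaluate the residual trace exactly, exploiting the fact that composition with a symbol depending on a single variable is exact. Throughout I normalize $\SN^{(d+2,d+2)}(a;\ell,\rho)$ and $\SN^{(d+2,d+2)}(b;\ell,\rho)$ to $1$; the dependence on these quantities follows by the same bookkeeping used earlier in the section.

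First I apply Lemma \ref{localization:lem} to the pair $(\L,\Om)$ and the model pair $(\L_0,\Om_0)$, obtaining
\[
\| \op_\a(b)\bigl( g_p(T_\a(a;\L,\Om)) - g_p(T_\a(a;\L_0,\Om_0)) \bigr)\|_{\GS_1} \le C_p(\a\ell\rho)^{d-1}.
\]
Because $b$ is supported on $B(\bz,\ell)\times B(\bmu,\rho)$ where $\L$ coincides with $\L_0$ and $\Om$ with $\Om_0$, the coefficient $\GW_0(bg_p(a))$ is unchanged if $(\L,\Om)$ is replaced by $(\L_0,\Om_0)$ in its defining integral. It therefore suffices to establish \eqref{asymptotics1:eq} with $(\L_0,\Om_0)$ in place of $(\L,\Om)$. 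If either $\L_0$ or $\Om_0$ equals $\varnothing$, both sides vanish trivially; otherwise, by hypothesis, at least one of them is $\R^d$, and by symmetry I treat the case $\L_0=\R^d$. Then $\chi_{\L_0}\equiv 1$ and $T_\a(a;\R^d,\Om_0)=P_{\Om_0,\a}\op_\a^{\rm l}(a)P_{\Om_0,\a}$; crucially $T_\a(1;\R^d,\Om_0)=P_{\Om_0,\a}$ is an orthogonal projection, so $g_p(T_\a(1;\R^d,\Om_0))=P_{\Om_0,\a}^p=P_{\Om_0,\a}$. Lemma \ref{through:lem}, applicable because $\Om_0$ is basic Lipschitz or $\R^d$, then yields
\[
\| \op_\a(b) g_p(T_\a(a;\R^d,\Om_0)) - \op_\a(a^p b) P_{\Om_0,\a} \|_{\GS_1} \le C_p(\a\ell\rho)^{d-1}.
\]

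The residual trace is computed exactly. By \eqref{lr:eq} I may pass from $\op_\a$ to $\op_\a^{\rm l}$ at the cost of $O((\a\ell\rho)^{d-1})$, and the exact composition identity $\op_\a^{\rm l}(c)\op_\a(\chi_{\Om_0})=\op_\a^{\rm l}(c\chi_{\Om_0})$ --- immediate from a Fourier calculation, valid because $\chi_{\Om_0}$ depends only on $\bxi$ --- combined with the standard trace formula for trace-class pseudodifferential operators produces
\[
\tr\bigl(\op_\a^{\rm l}(a^p b) P_{\Om_0,\a}\bigr) = \Bigl(\frac{\a}{2\pi}\Bigr)^d \iint a^p(\bx,\bxi)\, b(\bx,\bxi)\, \chi_{\Om_0}(\bxi)\, d\bx\, d\bxi = \a^d \GW_0(bg_p(a);\R^d,\Om_0),
\]
which is the claimed leading term. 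The symmetric case $\Om_0=\R^d$ is handled identically, using $T_\a(1;\L_0,\R^d)=\chi_{\L_0}$, the exact identity $\chi_{\L_0}\op_\a^{\rm l}(c)=\op_\a^{\rm l}(\chi_{\L_0}c)$, and cyclicity of trace (legitimate because $\op_\a^{\rm l}(a^p b)$ is trace class by compactness of $\supp b$). No genuine obstacle arises: the heavy lifting has already been done in Lemmas \ref{localization:lem} and \ref{through:lem}, and what remains is the direct one-variable Fourier composition calculation for the discontinuous projection symbols.
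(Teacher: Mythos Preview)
Your argument is correct and follows essentially the same route as the paper: reduce via Lemma~\ref{localization:lem} to $(\L_0,\Om_0)$, then via Lemma~\ref{through:lem} to $a\equiv 1$ (equivalently, replace $b$ by $a^p b$), observe that $g_p(T_\a(1;\L_0,\Om_0))$ collapses to the projection $P_{\Om_0,\a}$ or $\chi_{\L_0}$, and compute the remaining trace by integrating the kernel on the diagonal. The only cosmetic difference is that the paper phrases the final step as a direct diagonal-kernel computation rather than invoking the exact composition identity and cyclicity, but the content is the same.
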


%

\begin{proof} 
By Lemmas \ref{localization:lem} and \ref{through:lem} 
we may assume that $\L = \L_0, \Om = \Om_0$ and $a \equiv 1$. 
Under any of the conditions of the lemma 
we have either $T_\a(1; \L_0, \Om_0) = 0$ or $\chi_{\L_0}$ or $P_{\Om_0,\a}$. 
In the first case the left-hand side of \eqref{asymptotics1:eq} 
equals zero, and there is nothing to prove. 
If $T_\a(1) = \chi_{\L_0}$, then the sought trace has the form 
$\tr (\op_\a(b) \chi_{\L_0})$. 
This trace is easily found by integrating the kernel of the operator 
over the diagonal, and it does not depend on the choice of quantization. 
This immediately leads to \eqref{asymptotics1:eq}. 
If $T_\a(1) = P_{\Om_0, \a}$, then computing the trace 
$\tr (\op_\a^{\rm l}(b\chi_{\Om_0}))$ we obtain \eqref{asymptotics1:eq} 
again. Note that in this case it is convenient to choose the l-quantization 
for $\op_\a(b)$.  
\end{proof}

The next result is also useful. 

\begin{lem}\label{semi:lem}
Let the symbols $a$, $b$ be as in Lemma \ref{localasymptotics:lem}, and let 
$\a\ell\rho\ge \ell_0$. Suppose that $\L$ and $\Om$ satisfy \eqref{localdom:eq}. 
Then 
\begin{align}\label{asymptotics2:eq}
\bigl|\tr\bigl(\op_\a(b) \chi_\L 
g_p(T_\a(a; \R^d, \Om))\chi_\L\bigr)
- &\ \a^d \GW_0(b g_p(a); \L, \Om)|\notag\\[0.2cm]
\le &\ C_p(\a\ell\rho)^{d-1} 
\SN^{(d+2, d+2)}(b; \ell, \rho)\bigl(\SN^{(d+2, d+2)}(a; \ell, \rho)\bigr)^p.
\end{align}
\end{lem}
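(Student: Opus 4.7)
The plan is to approximate, in trace norm, the operator
$\op_\a(b)\chi_\L g_p(T_\a(a;\R^d,\Om))\chi_\L$ by one whose trace admits a direct evaluation, through three successive reductions, and then to compute that trace exactly.

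The first reduction uses the commutator bound \eqref{localcom:eq} together with the operator-norm estimate for $g_p(T_\a(a;\R^d,\Om))$ coming from Proposition \ref{product:prop}: it allows me to move the left factor $\chi_\L$ past $\op_\a(b)$ at the cost of $O((\a\ell\rho)^{d-1}\SN^{(d+2,d+2)}(b;\ell,\rho)(\SN^{(d+2,d+2)}(a;\ell,\rho))^p)$ in $\GS_1$. The second reduction applies \eqref{localization:eq} of Lemma \ref{localization:lem} with $(\L,\Om)\to(\R^d,\Om)$ and $(\L_0,\Om_0)\to(\R^d,\Om_0)$ to substitute $g_p(T_\a(a;\R^d,\Om))$ by $g_p(T_\a(a;\R^d,\Om_0))$; the hypotheses of that lemma hold trivially in the first slot and by \eqref{localdom:eq} in the second. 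The third reduction invokes Lemma \ref{through:lem} with pair $(\R^d,\Om_0)$ to pass $a^p$ into the symbol:
$\op_\a^{\rm l}(b)\, g_p(T_\a(a;\R^d,\Om_0)) \thicksim \op_\a^{\rm l}(a^p b)\, g_p(T_\a(1;\R^d,\Om_0))$.

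The key structural observation is that $T_\a(1;\R^d,\Om_0) = P_{\Om_0,\a}^2 = P_{\Om_0,\a}$, because $P_{\Om_0,\a}$ is the Fourier multiplier with the idempotent symbol $\chi_{\Om_0}$; in particular $g_p(P_{\Om_0,\a}) = P_{\Om_0,\a}$. After the three reductions, the trace in question equals, modulo the stated error, $\tr\bigl(\chi_\L\,\op_\a^{\rm l}(a^p b)\, P_{\Om_0,\a}\,\chi_\L\bigr)$. Using cyclicity of the trace and $\chi_\L^2=\chi_\L$, together with the exact composition identity $\op_\a^{\rm l}(a^p b)\, P_{\Om_0,\a} = \op_\a^{\rm l}(a^p b\,\chi_{\Om_0})$ (exact because the right factor is a Fourier multiplier), this reduces to the diagonal integral $\tr\bigl(\op_\a^{\rm l}(a^p b\,\chi_{\Om_0})\,\chi_\L\bigr) = \a^d\,\GW_0(a^p b;\L,\Om_0)$. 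By $\supp b\subset B(\bz,\ell)\times B(\bmu,\rho)$ and \eqref{localdom:eq}, this coincides with $\a^d\,\GW_0(b\,g_p(a);\L,\Om)$, the claimed leading term.

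There is no genuine obstacle here: the whole argument is bookkeeping built on tools already assembled in the paper. The only conceptual point requiring attention is the idempotence of $P_{\Om_0,\a}$ as a Fourier multiplier, which collapses the $p$-th power of $T_\a(1;\R^d,\Om_0)$ and allows the closing trace computation to be carried out with no further approximation.
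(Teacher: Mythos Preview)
Your proof is correct and follows essentially the same route as the paper's own argument: commute $\chi_\L$ through $\op_\a(b)$ via \eqref{localcom:eq}, replace $\Om$ by $\Om_0$ and $a$ by $1$ via Lemmas~\ref{localization:lem} and~\ref{through:lem}, collapse $g_p(T_\a(1;\R^d,\Om_0))$ to $P_{\Om_0,\a}$, and compute the resulting trace by integrating the kernel on the diagonal. Your write-up is simply more explicit about the exact identity $\op_\a^{\rm l}(a^p b)\,P_{\Om_0,\a}=\op_\a^{\rm l}(a^p b\,\chi_{\Om_0})$ and about why $\GW_0(b g_p(a);\L,\Om_0)=\GW_0(b g_p(a);\L,\Om)$ via the support of $b$.
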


\begin{proof}
Due to \eqref{localcom:eq}, 
the problem reduces to finding the trace of the operator 
\begin{equation*}
\chi_\L\op_\a^{\rm l}(b) 
g_p(T_\a(a; \R^d, \Om))\chi_\L.
\end{equation*}
As in the proof of Lemma \ref{localasymptotics:lem}, by virtue of 
Lemmas \ref{localization:lem} and \ref{through:lem}  
we may assume that $\Om = \Om_0$ 
and $a\equiv 1$. Thus $T_\a(1; \R^d, \Om_0) = P_{\Om_0, \a}$. Again, 
the trace of the operator 
$\chi_\L\op_\a^{\rm l}(b) P_{\Om_0, \a} \chi_\L$ is easily seen to be equal to 
$\a^d \GW_0(b; \L, \Om)$.
\end{proof}

So far it was enough to assume that the domains were Lipschitz. To 
state the asymptotic result we need more restrictive conditions.  
 
\begin{prop}\label{sobw:prop}
Let $a, b\in\BS^{(d+2, d+2)}$, and let $b$ satisfy \eqref{support:eq}. 
 Assume that \eqref{localdom:eq} holds with some basic domains $\L_0, \Om_0$ 
 such that $\L_0$ is $\plainC1$ and $\Om_0$ is $\plainC3$.  
Then 
\begin{align}\label{toeplitz:eq}
\tr \bigl(\op_\a^{\rm l}(b) g_p(T_\a(a))\bigr)
= &\ \a^d \GW_0(b g_p(a); \L, \Om)\notag\\[0.2cm]
&\ + \a^{d-1} \log\a\  \GW_1(b\GA(g_p; a); \p\L, \p\Om) + o(\a^{d-1}\log\a),
\end{align}
as $\a\to\infty$.
\end{prop}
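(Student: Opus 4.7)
The plan is a two-step reduction that isolates all the hard analytic work in the local asymptotic formula for basic smooth domains already established in \cite{Sob}. First I would transfer the problem from the actual pair $(\L, \Om)$ to the basic model pair $(\L_0, \Om_0)$ using Lemma \ref{localization:lem}. Second, I would invoke the corresponding local two-term Widom formula for basic $\plainC1/\plainC3$ domains proved in \cite{Sob}.

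For the first step, since $b$ is supported in $B(\bz, \ell) \times B(\bmu, \rho)$, the products $b\,g_p(a)$ and $b\,\GA(g_p; a)$ are supported in the region where \eqref{localdom:eq} forces the two domain pairs to agree. Inspecting the definitions \eqref{w0:eq} and \eqref{w1:eq} then gives the coefficient identities
\begin{align*}
\GW_0(b\,g_p(a); \L, \Om) &= \GW_0(b\,g_p(a); \L_0, \Om_0),\\
\GW_1(b\,\GA(g_p; a); \p\L, \p\Om) &= \GW_1(b\,\GA(g_p; a); \p\L_0, \p\Om_0),
\end{align*}
while Lemma \ref{localization:lem} yields the trace-norm bound
\begin{equation*}
\bigl\|\op_\a^{\rm l}(b)\bigl(g_p(T_\a(a;\L,\Om)) - g_p(T_\a(a;\L_0,\Om_0))\bigr)\bigr\|_{\GS_1} \le C_p(\a\ell\rho)^{d-1},
\end{equation*}
which is $O(\a^{d-1}) = o(\a^{d-1}\log\a)$ for fixed $\ell, \rho$. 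Consequently it suffices to prove the asserted asymptotics with $(\L, \Om)$ replaced throughout by $(\L_0, \Om_0)$.

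The resulting statement for the basic pair $(\L_0, \Om_0)$, with $\L_0$ basic $\plainC1$ and $\Om_0$ basic $\plainC3$, is precisely the local two-term asymptotic formula established in \cite{Sob} as the intermediate result underlying the main theorem there: in that work the global formula for bounded smooth domains was derived by patching such local asymptotics via a partition of unity, so the local version carrying an additional compactly supported symbol factor $\op_\a^{\rm l}(b)$ is available as a black box. The $\plainC3$-regularity on $\Om_0$ reflects the symbol-calculus cost of controlling the discontinuity of $\chi_{\Om_0}$ on the momentum side, whereas $\plainC1$ suffices on the coordinate side.

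The main obstacle, viewed in isolation, is the derivation of that local two-term asymptotic with the log correction, which is the hard technical content of \cite{Sob} and is imported wholesale here; the present paper's novelty lies elsewhere, in handling the non-smooth pieces of the boundaries by the trace bounds of Section~3. One further minor point worth flagging: by \eqref{lr:eq} the difference $\op_\a^{\rm l}(b) - \op_\a^{\rm r}(b)$ is $O((\a\ell\rho)^{d-1})$ in $\GS_1$ and hence negligible at the order of the remainder, so any convenient choice of quantization in intermediate computations is admissible.
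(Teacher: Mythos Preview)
Your proposal is correct and matches the paper's own proof, which is the one-line remark that the result ``follows from \cite{Sob}, Theorem 11.1 upon application of Lemma \ref{localization:lem}.'' You have spelled out the two steps (localization via Lemma \ref{localization:lem}, then the black-box appeal to the local asymptotics for basic smooth domains from \cite{Sob}) together with the observation that the coefficients $\GW_0$ and $\GW_1$ are unaffected by the passage from $(\L,\Om)$ to $(\L_0,\Om_0)$ because of the support of $b$.
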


This proposition follows from \cite{Sob}, Theorem 11.1 upon application of 
Lemma \ref{localization:lem}.  

\section{Proof of the main theorems}
  
Here we concentrate on proving Theorems \ref{main:thm} and \ref{main_inf:thm}. 
As explained earlier, Theorem \ref{main:thm} implies the main results -- 
Theorems \ref{main_anal:thm} and \ref{main_s:thm}. 

\subsection{An intermediate local asymptotics}
We begin with the following local result:

\begin{thm}\label{main_local:thm}  
Suppose that $b\in\BS^{(d+2, d+2)}$  
is a symbol with compact support in both variables, and that 
$\L$ is 
a piece-wise $\plainC1$ basic domain, and 
$\Om$ a piece-wise $\plainC3$ basic domain. 
Then 
\begin{align}\label{toeplitza1:eq}
\tr \bigl(\op_\a^{\rm l}(b) g_p(T_\a(1))\bigr)
= &\ \a^d \GW_0(b; \L, \Om)\notag\\[0.2cm]
&\ + \a^{d-1} \log\a\  \GW_1(b\GA(g_p; 1); \p\L, \p\Om) + o(\a^{d-1}\log\a),
\end{align}
as $\a\to\infty$. 
\end{thm}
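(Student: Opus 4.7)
The plan is to decompose $b$ by a partition of unity that separates small $\d$-neighborhoods of the singular sets $(\p\L)_{\rm s}$, $(\p\Om)_{\rm s}$ from the rest, apply Proposition \ref{sobw:prop} (together with Lemma \ref{localasymptotics:lem}) on the smooth part, and control the singular part with the Lipschitz-uniform trace bounds of Propositions \ref{sandwich:prop}, \ref{HS:prop} and Lemma \ref{semi:lem}. Fix $\d > 0$ small and choose smooth cutoffs $\eta_1(\bx), \eta_2(\bxi) \in \plainC\infty_0(\R^d)$ with $\eta_i \equiv 1$ on the $\d$-neighborhood and $\eta_i \equiv 0$ off the $2\d$-neighborhood of $(\p\L)_{\rm s}$, $(\p\Om)_{\rm s}$ respectively. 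Write $b = b_{\rm g} + b_{\rm b}$ with $b_{\rm g} = b(1-\eta_1)(1-\eta_2)$ and $b_{\rm b} = b - b_{\rm g}$, and analyze the two traces separately.

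For $b_{\rm g}$: its support is at distance at least $\d$ from both singular sets. Partition $b_{\rm g} = \sum_{j,k} b_{j,k}$ into finitely many pieces supported in balls $B(\bz_j, \d/8) \times B(\bmu_k, \d/8)$. For each piece, \eqref{localdom:eq} holds either with both $\L_0$ basic $\plainC1$ and $\Om_0$ basic $\plainC3$ (in which case Proposition \ref{sobw:prop} gives the full two-term asymptotics with error $o(\a^{d-1}\log\a)$), or with one of $\L_0, \Om_0$ equal to $\R^d$ or $\varnothing$ (in which case Lemma \ref{localasymptotics:lem} gives only the $\a^d$-term with error $O(\a^{d-1})$, matching the vanishing of the local boundary integral). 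Summing the $O(\d^{-2d})$ pieces yields
\begin{equation*}
\tr\bigl(\op_\a^{\rm l}(b_{\rm g})g_p(T_\a(1))\bigr) = \a^d \GW_0(b_{\rm g};\L,\Om) + \a^{d-1}\log\a\ \GW_1(b_{\rm g}\GA(g_p;1);\p\L,\p\Om) + o(\a^{d-1}\log\a)
\end{equation*}
as $\a\to\infty$, for $\d$ fixed.

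For $b_{\rm b} = b\eta_1 + b(1-\eta_1)\eta_2$: the first summand has $\bx$-support in the $2\d$-tube around the $(d-2)$-dimensional set $(\p\L)_{\rm s}$, of $d$-volume $O(\d^2)$, so it may be covered by $O(\d^{2-d})$ $\bx$-balls of radius $\d$; the $\bxi$-support lies in a fixed ball of radius $\rho_0$. Since $\L$ is globally piece-wise $\plainC1$ and hence locally Lipschitz, on each localized piece $b_{\eta_1, i}$ the combination of Lemma \ref{semi:lem} (taken with $p=1$, $a\equiv 1$, so that $T_\a(1;\R^d,\Om) = P_{\Om,\a}$) and the corollary of Proposition \ref{HS:prop} applied to the identity $g_p(T_\a(1)) - T_\a(1) = -T_\a(1)(I - T_\a(1))q_p(T_\a(1))$ gives
\begin{equation*}
\bigl|\tr\bigl(\op_\a^{\rm l}(b_{\eta_1,i})g_p(T_\a(1))\bigr) - \a^d\GW_0(b_{\eta_1,i};\L,\Om)\bigr| \le C_p (\a\d\rho_0)^{d-1} \log(\a\d\rho_0).
\end{equation*}
Summation over the $O(\d^{2-d})$ pieces produces a total error of order $\d\,\a^{d-1}\log\a$. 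The symmetric piece $b(1-\eta_1)\eta_2$ is handled identically with the roles of $\bx$ and $\bxi$ exchanged.

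Since $(\p\L)_{\rm s}, (\p\Om)_{\rm s}$ are $(d-2)$-dimensional inside the $(d-1)$-surfaces $\p\L, \p\Om$, their $2\d$-neighborhoods on those surfaces have $(d-1)$-measure $O(\d)$, so $\GW_1(b_{\rm g}\GA(g_p;1);\p\L,\p\Om) = \GW_1(b\GA(g_p;1);\p\L,\p\Om) + O(\d)$. Together with the additivity $\GW_0(b_{\rm g};\L,\Om) + \GW_0(b_{\rm b};\L,\Om) = \GW_0(b;\L,\Om)$, assembling the two analyses gives
\begin{equation*}
\limsup_{\a\to\infty}\frac{\bigl|\tr\bigl(\op_\a^{\rm l}(b)g_p(T_\a(1))\bigr) - \a^d\GW_0(b;\L,\Om) - \a^{d-1}\log\a\, \GW_1(b\GA(g_p;1);\p\L,\p\Om)\bigr|}{\a^{d-1}\log\a} \le C\d,
\end{equation*}
and letting $\d \to 0$ finishes the proof. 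The main obstacle is the analysis of the singular-neighborhood contribution: one has to combine the Lipschitz-domain trace bounds of \cite{Sob_2013} with the $(d-2)$-dimensionality of $(\p\L)_{\rm s}, (\p\Om)_{\rm s}$ to obtain an error of size $\d\,\a^{d-1}\log\a$ rather than merely $\a^{d-1}\log\a$, which is precisely what allows the diagonal argument $\d\to 0$ to upgrade the bound to $o(\a^{d-1}\log\a)$.
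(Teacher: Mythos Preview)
Your strategy is essentially the paper's: localize away from the singular sets and invoke Proposition~\ref{sobw:prop}, bound the contribution near the singular sets via \eqref{nucl:eq} together with the linear case (Lemma~\ref{semi:lem}), exploit the $(d-2)$-dimensionality of $(\p\L)_{\rm s},(\p\Om)_{\rm s}$ to get an $O(\d)$ bound on the $\limsup$, and let $\d\to 0$. The paper organizes the partition slightly differently---it covers $\p\L$ and $\p\Om$ by $\varepsilon$-balls in both variables at once and then sorts the pieces, rather than carving out the singular tubes first---but the ingredients and the counting are the same.

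There is, however, one slip in your treatment of $b_{\rm b}$. The piece $b(1-\eta_1)\eta_2$ cannot be ``handled identically with the roles of $\bx$ and $\bxi$ exchanged'': the factor $1-\eta_1$ carries $\bx$-derivatives of size $\d^{-n}$, so if you keep the $\bx$-scale $\ell$ equal to the full support radius of $b$ while taking $\rho=\d$, the norm $\SN^{(d+2,d+2)}(\,\cdot\,;\ell,\d)$ blows up like $(\ell/\d)^{d+2}$ and the bounds from Proposition~\ref{HS:prop} and its corollary give nothing useful. The fix is immediate: decompose instead as $b_{\rm b} = b\eta_1 + b\eta_2 - b\eta_1\eta_2$. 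Then $b\eta_1$ and $b\eta_2$ each have large derivatives only in the variable whose support already lies in a $2\d$-tube, so your single-variable localization works for each of them exactly as you describe; the cross term $b\eta_1\eta_2$ is supported in $O(\d^{4-2d})$ boxes of side $\d\times\d$, each contributing $(\a\d^2)^{d-1}\log(\a\d^2)$, for a harmless total of $O(\d^2\a^{d-1}\log\a)$. With this correction your argument goes through and matches the paper's.
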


Without loss of generality assume that 
the symbol $b$ is supported on $B(\bold0, 1)\times B(\bold0, 1)$. 
If $B(\bold0, 1)\cap\p\L = \varnothing$ or $B(\bold0, 1)\cap\p\Om = \varnothing$, 
then the required asymptotics immediately follow from Lemma \ref{localasymptotics:lem}. 
Assume that neither of the above intersections is empty. Cover 
the boundaries $\p\L\cap B(\bold0, 1)$ and 
$\p\Om\cap B(\bold0, 1)$ with finitely many open balls 
of radius $\varepsilon>0$. Denote the number of such balls 
by $J=J_\varepsilon$ and $K = K_\varepsilon$ respectively. 
Since $\p\L$ and $\p\Om$ are Lipschitz, 
one can construct these coverings in such a way that the number of intersections 
of  each ball with the other ones is bounded from above 
uniformly in $\varepsilon$ and 
\begin{equation}\label{pokr:eq}
J_\varepsilon,\ K_\varepsilon\le C\varepsilon^{1-d}.
\end{equation} 
Let $\Sigma_\L$ (resp. $\Sigma_\Om$) 
be the set of indices $j$ (resp. $k$) such that 
the ball from the constructed covering 
indexed $j$ (resp. $k$) has a non-empty intersection  
with the set $(\p\L)_{\rm s}$ (resp. $(\p\Om)_{\rm s}$). 
Since the sets $(\p\L)_{\rm s}, (\p\Om)_{\rm s}$ 
are built out of Lipschitz surfaces, by construction of the covering we have  
\begin{equation}\label{no_sigma:eq}
\#(\Sigma_\L), \#(\Sigma_\Om)\le C\varepsilon^{2-d}. 
\end{equation}
We may assume that the covering balls with indices $j\notin\Sigma_\L$ (resp. $k\notin\Sigma_\Om$) 
are separated from $(\p\L)_{\rm s}$ (resp. $(\p\Om)_s$). Thus in each of these balls 
the boundary $\p\L$ (resp. $\p\Om$) is $\plainC1$ (resp. $\plainC3$).

Denote by $\phi_j, j=1, 2, \dots, J,$ 
and $\psi_k, k=1, 2,\dots, K$ the associated smooth partitions of unity, so 
that the functions 
\begin{equation*}
\phi(\bx):=
\sum_{j=1}^J \phi_j(\bx), \ \ 
\ \ 
\psi(\bxi):=
\sum_{j=1}^K \psi_k(\bxi)
\end{equation*}
equal $1$ on a neighbourhood of $\p\L\cap B(\bold0, 1)$ and 
$\p\Om\cap B(\bold0, 1)$ respectively, 
and 
\begin{equation}\label{partuni:eq}
|\nabla^n\phi_j(\bx)| + |\nabla^n\psi_j(\bxi)|\le C_n\varepsilon^{-n},\ n = 0, 1, \dots, 
\end{equation}
uniformly in $\bx$ and $\bxi$. 
The symbol $b(1-\phi\psi)$ is supported away 
from  $\bigl(\p\L\cap B(\bold0, 1)\bigr)
\times \bigl(\p\Om\cap B(\bold0, 1)\bigr)$. 
Thus Lemma \ref{localasymptotics:lem} implies that  
\begin{equation}\label{inside:eq}
\biggl|
\tr \bigl(\op^{\rm l}_\a(b(1-\phi\psi)) g_p(T_\a(1))\bigr) 
- \a^d \GW_0\bigl(b(1-\phi\psi)\bigr)
\biggr|\le C_\varepsilon \a^{d-1}.
\end{equation}
The constant $C_\varepsilon$ on the right-hand side depends 
on the symbol $b$, and on $\varepsilon$, but 
the latter fact does not matter for the rest of the proof. 
It remains to study the trace 
$\tr \bigl(\op^{\rm l}_\a(b\phi\psi) g_p(T_\a(1))\bigr)$. 

Let us separate contributions from the smooth and singular 
parts of the boundaries $\p\L$ and $\p\Om$. Denote 
\begin{equation*}
\tilde b(\bx, \bxi) = \sum_{j\notin\Sigma_\L} 
\sum_{k\notin \Sigma_\Om} b_{jk}(\bx, \bxi),\ 
b_{jk}(\bx, \bxi)
= 
b(\bx, \bxi) \phi_j(\bx) \psi_k(\bxi). 
\end{equation*} 
The support of $\tilde b$ contains only smooth 
parts of the boundaries $\p\L$ and $\p\Om$, so  
by Proposition \ref{sobw:prop}  we have 
\begin{align}
\lim_{\a\to\infty}\frac{1}{\a^{d-1}\log\a}
\biggl(\tr\bigl(\op_\a^{\rm l}(\tilde b) g_p(T_\a(1))\bigr)
- &\ \a^d \GW_0\bigl(\tilde b \bigr)\notag\\[0.2cm]
&\ - \a^{d-1}\log\a\  \GW_1\bigl(\tilde b \GA(g_p; 1)\bigr)\biggr) = 0.
\label{last_step:eq}
\end{align}
It remains to handle  the cases when 
$j\in\Sigma_\L$ or $k\in\Sigma_\Om$.  
Let 
\begin{equation*}
\boldsymbol\Sigma = \{(j, k): j\in\Sigma_\L \ \textup{or}\ \ k\in \Sigma_\Om\}.
\end{equation*}

\begin{lem} 
Let $b_{jk}$ be as defined above, and let $p\ge 1$. Then 
\begin{align}
\limsup\frac{1}{\a^{d-1} \log\a}\sum_{(j, k)\in\boldsymbol\Sigma}&\  
\biggl|\tr \bigl(\op^{\rm l}_\a(b_{jk}) g_p(T_\a(1))\bigr)\notag \\[0,3cm]
- &\ \a^{d} \GW_0(b_{jk})
- \a^{d-1} \log\a \ \GW_1(b_{jk} \GA(g_p; 1))\biggr|
\le C\varepsilon,\label{sum_jk:eq}
\end{align}
as $\a\to\infty$.
\end{lem}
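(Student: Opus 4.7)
The plan is to bound each term in the sum \eqref{sum_jk:eq} by a factor of the form $(\a\varepsilon^2)^{d-1}\log(\a\varepsilon^2)$, combined with a crude surface-area bound on the $\GW_1$-coefficient, and then to exploit the cardinality estimate $\#\boldsymbol\Sigma \le C\varepsilon^{3-2d}$ that follows from \eqref{pokr:eq} and \eqref{no_sigma:eq}. Each $b_{jk}$ is supported in $B(\bz_j, 2\varepsilon)\times B(\bmu_k, 2\varepsilon)$, and the derivative bounds \eqref{partuni:eq} imply $\SN^{(d+2,d+2)}(b_{jk};\varepsilon,\varepsilon) \le C$ uniformly in $(j,k)$, so all previously established trace bounds apply with $\ell = \rho = \varepsilon$.

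First I would replace $g_p(T_\a(1))$ by $T_\a(1)$. Using the factorization $t^p - t = t(t-1)q(t)$ with $q(t) = \sum_{i=0}^{p-2} t^i$ and the uniform bound $\|q(T_\a(1))\|\le C_p$, Corollary \eqref{nucl:eq} yields
\[
\|\op^{\rm l}_\a(b_{jk})\bigl(g_p(T_\a(1))-T_\a(1)\bigr)\|_{\GS_1} \le C_p(\a\varepsilon^2)^{d-1}\log(\a\varepsilon^2).
\]
Since $T_\a(1;\L,\Om) = \chi_\L T_\a(1;\R^d,\Om)\chi_\L$, applying Lemma \ref{semi:lem} with $a\equiv 1$ and $p = 1$ (valid because the basic Lipschitz $\L,\Om$ trivially satisfy \eqref{localdom:eq} with $\L_0 = \L$ and $\Om_0 = \Om$) gives
\[
\bigl|\tr\bigl(\op^{\rm l}_\a(b_{jk})T_\a(1)\bigr) - \a^d \GW_0(b_{jk};\L,\Om)\bigr| \le C(\a\varepsilon^2)^{d-1}.
\]
Combining the two,
\[
\bigl|\tr\bigl(\op^{\rm l}_\a(b_{jk})g_p(T_\a(1))\bigr) - \a^d \GW_0(b_{jk})\bigr| \le C_p(\a\varepsilon^2)^{d-1}\log(\a\varepsilon^2).
\]

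Next I would dispose of the $\GW_1$-coefficient purely geometrically: the Lipschitz portions $\p\L\cap B(\bz_j, 2\varepsilon)$ and $\p\Om\cap B(\bmu_k, 2\varepsilon)$ have $(d-1)$-dimensional surface area at most $C\varepsilon^{d-1}$, so
\[
|\GW_1(b_{jk}\GA(g_p;1);\p\L,\p\Om)| \le C_p\varepsilon^{2(d-1)}.
\]

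Finally, summing over $(j,k)\in\boldsymbol\Sigma$ and using $\#\boldsymbol\Sigma \le \#\Sigma_\L\cdot K_\varepsilon + J_\varepsilon\cdot\#\Sigma_\Om \le C\varepsilon^{3-2d}$ together with $\log(\a\varepsilon^2) \le \log\a$, both contributions total at most $C_p\varepsilon\cdot\a^{d-1}\log\a$, the arithmetic identity $2(d-1)+(3-2d) = 1$ being responsible for the exponent $1$ of $\varepsilon$. Dividing by $\a^{d-1}\log\a$ and passing to $\limsup$ gives \eqref{sum_jk:eq}. The main (and essentially only) subtlety is the dimension bookkeeping: any loss in the Lipschitz covering of the singular sets would destroy the linear dependence on $\varepsilon$, so the bound $\#\Sigma_\L,\#\Sigma_\Om\le C\varepsilon^{2-d}$ stemming from the $(d-2)$-dimensional Lipschitz structure of $(\p\L)_{\rm s}$, $(\p\Om)_{\rm s}$ is crucial; everything else is a bookkeeping application of the previously established Lipschitz trace estimates.
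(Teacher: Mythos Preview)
Your proposal is correct and follows essentially the same route as the paper: reduce each individual term via the factorization $t^p-t=t(1-t)\tilde g(t)$ and the bound \eqref{nucl:eq}, handle the linear piece by Lemma \ref{semi:lem}, estimate $|\GW_1(b_{jk}\GA(g_p;1))|\le C\varepsilon^{2(d-1)}$ from the surface-area support, and then sum using $\#\boldsymbol\Sigma\le C\varepsilon^{3-2d}$. The only cosmetic difference is that the paper states the per-term bound \eqref{jk:eq} first and then sums, whereas you sum directly; the content is identical.
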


\begin{proof}
It is enough to establish the estimate 
\begin{align}
\limsup_{\a\to\infty}
\frac{1}{\a^{d-1} \log\a}&\  
\biggl|\tr \bigl(\op^{\rm l}_\a(b_{jk}) g_p(T_\a(1))\bigr) \notag\\[0,3cm]
- &\ \a^{d} \GW_0(b_{jk})
- \a^{d-1} \log\a \ \GW_1(b_{jk} \GA(g_p; 1))\biggr|
\le C\varepsilon^{2(d-1)}.\label{jk:eq}
\end{align}
Indeed, in view of \eqref{pokr:eq} and \eqref{no_sigma:eq}, 
the number of summands on the left-hand side of 
\eqref{sum_jk:eq} does not exceed  $C\varepsilon^{3-2d}$, and hence 
summing \eqref{jk:eq} up over $(j, k)\in\boldsymbol\Sigma$ we obtain 
\eqref{sum_jk:eq}.  
If $p = 1$, then the trace asymptotics of the 
operator $\op_\a^{\rm l}(b_{jk}) T_\a(1)$ are easy to find. 
Indeed, by Lemma \ref{semi:lem}, we have 
\begin{equation}\label{linear:eq}
 \tr \bigl(\op^{\rm l}_\a (b_{jk})\chi_\L P_{\Om, \a}\chi_\L\bigr)
 = \a^d \GW_0(b_{jk}) + O((\a\varepsilon^2)^{d-1}). 
\end{equation}
Thus it remains to study the trace of the operator
\begin{equation*}
\op^{\rm l}_\a( b_{jk}) g(T_\a(1)), \ g(t) = t^p-t,
\end{equation*}
with $p\ge 2$.
Represent $g(t) = t(1-t) \tilde g(t)$ with a polynomial $\tilde g$,  
and estimate using \eqref{nucl:eq}: 
\begin{align*}
\|\op^{\rm l}_\a(b_{jk}) g(T_\a(1))\|_{\GS_1}
\le &\ \|\op^{\rm l}_\a(b_{jk}) T_\a(1) \bigl(I-T_\a(1)\bigr)\|_{\GS_1} \| \tilde g(T_\a(1))\|\\[0.2cm]
\le &\ 
C \max_{0\le t\le 1}|\tilde g(t)| 
(\a\varepsilon^2)^{d-1}\log(\a\varepsilon^2), 
\end{align*}
for sufficiently large $\a$. 
Together with \eqref{linear:eq} this implies that 
\begin{equation}\label{pre_jk:eq}
\limsup\frac{1}{\a^{d-1} \log\a}  
\biggl|\tr \bigl(\op^{\rm l}_\a(b_{jk}) g_p(T_\a(1))\bigr) 
- \a^{d} \GW_0( b_{jk})\biggr|
\le C\varepsilon^{2(d-1)}, 
\end{equation}
as $\a\to\infty$. 
It follows straight from the definition 
\eqref{w1:eq} that  
\begin{equation*}
\bigl|\GW_1\bigl(b_{jk}\GA(g_p; 1)\bigr)\bigr|
\le C\varepsilon^{2(d-1)}, 
\end{equation*}
so \eqref{pre_jk:eq} entails \eqref{jk:eq}, as claimed. 
\end{proof}

\begin{proof}[Proof of Theorem 
\ref{main_local:thm}]
Remembering that $\GW_1(b(1-\phi\psi) \GA(g_p; 1)) = 0$, and 
putting together \eqref{last_step:eq}, \eqref{inside:eq} and  
  \eqref{sum_jk:eq} we obtain that 
 \begin{equation*}
\limsup\frac{1}{\a^{d-1} \log\a}
\biggl|\tr \bigl( \op^{\rm l}_\a(b)g_p(T_\a(1))\bigr) 
- \a^{d} \GW_0( b)
- \a^{d-1} \log\a \ \GW_1\bigl(b \GA(g_p; 1)\bigr)\biggr|
\le C\varepsilon, 
\end{equation*} 
as $\a\to\infty$, for any $\varepsilon>0$. Taking $\varepsilon\to 0$ 
we arrive at the asymptotics \eqref{toeplitza1:eq}. 
\end{proof}

\subsection{Proof of Theorems \ref{main:thm} 
and \ref{main_inf:thm}}  
The proofs amount to putting together local asymptotic formulas and 
estimates obtained above. The argument is based on partition of unity, and 
is rather standard. We present it for the sake of completeness. 
Also, all the proofs are conducted for the operator $T_\a$ only
-- the argument  
for $S_\a$ is essentially the same. 
  
The next two lemmas are the last building blocks in the 
proofs of Theorems \ref{main:thm} and \ref{main_inf:thm}. 

\begin{lem} 
Let the conditions of Theorem \ref{main_inf:thm} 
be satisfied. 
Let $h\in\plainC\infty_0(\R^d)$ be an arbitrary function. 
Then 
\begin{align}\label{main_inf_h:eq}
\lim_{\a\to\infty}\frac{1}{\a^{d-1}\log\a}
\bigl[
\tr \bigl(h g_p(T_\a(a; \L, \Om))\bigr) -  \a^d \GW_0(h &\ g_p(a); \L, \Om)
\bigr]\notag\\[0.2cm]
=  &\ \GW_1(\GA(h g_p; a); \p\L, \p\Om),
\end{align} 
and 
\begin{align}\label{main_inf_h1:eq}
\lim_{\a\to\infty}\frac{1}{\a^{d-1}\log\a}
\tr\bigl[\tr \bigl(h \chi_\L g_p(T_\a(a; \R^d, \Om))\chi_\L\bigr) 
- \a^d \GW_0(h g_p(a); \L, \Om)
\bigr] = 0.
\end{align} 
If $T_\a(a)$ is replaced with $S_\a(a)$, then the same formulas hold 
with the symbol $a$ replaced by $\re a$ 
in $\GW_0$ and $\GW_1$. 
\end{lem}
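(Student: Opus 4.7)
The plan is to prove \eqref{main_inf_h:eq} first, and to observe that the proof of \eqref{main_inf_h1:eq} follows the same template with Theorem \ref{main_local:thm} replaced throughout by Lemma \ref{semi:lem}. Lemma \ref{semi:lem} furnishes only the volume term $\a^d\GW_0$ with no logarithmic contribution and error $O(\a^{d-1})$, so after partitioning the normalized difference tends to $0$ as required.

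To prove \eqref{main_inf_h:eq}, I first replace $h$, which is compactly supported only in $\bx$, by a symbol compactly supported in both variables. Pick $\psi\in\plainC\infty_0(\R^d)$ with $\psi\equiv 1$ on a neighbourhood of $\overline\Om$ and set $b(\bx,\bxi)=h(\bx)\psi(\bxi)$. The difference $(h-\op_\a^{\rm l}(b))g_p(T_\a(a))=\op_\a^{\rm l}(h(1-\psi))g_p(T_\a(a))$ carries the cutoff $\chi_\L P_{\Om,\a}$ on the left (since $T_\a(a)^p$ does), and $(1-\psi)(\bxi)\chi_\Om(\bxi)\equiv 0$; Propositions \ref{product:prop} and \ref{HS:prop}, applied after a smooth cutoff of $\chi_\Om$, then imply that this difference has trace norm $o(\a^{d-1}\log\a)$. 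Next, cover $\supp h$ by finitely many balls $B(\bz_i,r_i)$ small enough that $\L\cap B(\bz_i,r_i)$ coincides with a basic piece-wise $\plainC1$ domain $\L_i$ (or with $\R^d$ or $\varnothing$), and cover $\supp\psi$ by balls $B(\bmu_k,\rho_k)$ in which $\Om$ coincides with a basic piece-wise $\plainC3$ domain $\Om_k$. With associated smooth partitions of unity $\{\phi_i\},\{\eta_k\}$, decompose $b=\sum_{i,k}b_{ik}$ where $b_{ik}=h\psi\phi_i\eta_k$ is compactly supported in $B(\bz_i,r_i)\times B(\bmu_k,\rho_k)$.

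The core of the argument is the local asymptotics for each $(i,k)$. I will chain three tools. First, Lemma \ref{localization:lem} allows replacement of $T_\a(a;\L,\Om)$ by $T_\a(a;\L_i,\Om_k)$ at cost $O(\a^{d-1})$. Second, Lemma \ref{through:lem} rewrites $\op_\a^{\rm l}(b_{ik})g_p(T_\a(a;\L_i,\Om_k))$ as $\op_\a^{\rm l}(b_{ik}a^p)g_p(T_\a(1;\L_i,\Om_k))$ modulo a further $O(\a^{d-1})$. Third, since $\L_i$ is basic piece-wise $\plainC1$ and $\Om_k$ is basic piece-wise $\plainC3$, Theorem \ref{main_local:thm} supplies the two-term asymptotics with coefficients $\a^d\GW_0(b_{ik}a^p;\L_i,\Om_k)$ and $\a^{d-1}\log\a\,\GW_1(b_{ik}a^p\GA(g_p;1);\p\L_i,\p\Om_k)$. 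A direct computation from \eqref{GA:eq} gives $\GA(g_p;s)=s^p\GA(g_p;1)$ and $\GA(hg_p;s)=h\GA(g_p;s)$, so the logarithmic coefficient equals $\GW_1(b_{ik}\GA(g_p;a);\p\L_i,\p\Om_k)$.

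Summing the finitely many local asymptotics and using that $\L_i=\L$ and $\Om_k=\Om$ on $\supp\phi_i$ and $\supp\eta_k$ respectively, together with $\sum_{i,k}b_{ik}=h\psi$ and $\psi\equiv 1$ on $\Om\cup\p\Om$, reproduces the leading term $\a^d\GW_0(hg_p(a);\L,\Om)$ and logarithmic coefficient $\GW_1(h\GA(g_p;a);\p\L,\p\Om)=\GW_1(\GA(hg_p;a);\p\L,\p\Om)$, which is exactly the right-hand side of \eqref{main_inf_h:eq}. The main obstacle in executing this plan is the very first step: all composition and trace estimates of Section 3 are stated for symbols compactly supported in both $\bx$ and $\bxi$, so to reduce $h$ to $\op_\a^{\rm l}(h\psi)$ one must exploit the disjointness of $\supp(1-\psi)$ from $\Om$ via a smooth approximation of $\chi_\Om$ (to which Proposition \ref{HS:prop} applies) and carefully track that the symbol-composition remainder beats $\a^{d-1}\log\a$.
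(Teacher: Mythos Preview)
Your overall strategy---partition of unity in both variables, then Lemma~\ref{localization:lem} to replace $\L,\Om$ by basic domains, Lemma~\ref{through:lem} to reduce to $a\equiv 1$, and Theorem~\ref{main_local:thm} (resp.\ Lemma~\ref{semi:lem}) for the local asymptotics---is exactly the paper's approach, and the identification $\GA(g_p;a)=a^p\GA(g_p;1)$ is correct.

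The one substantive remark concerns your ``main obstacle''. The workaround you sketch (bound $\op_\a^{\rm l}(h(1-\psi))g_p(T_\a(a))$ via Proposition~\ref{HS:prop} after smoothly approximating $\chi_\Om$) is awkward and, as stated, would at best give $O(\a^{d-1}\log\a)$ rather than $o(\a^{d-1}\log\a)$. But the obstacle is illusory. Expand the first factor of $(T_\a(a))^p$ and observe that since $\Om$ is bounded and $\psi\chi_\Om=\chi_\Om$, the identity
\[
h\,g_p(T_\a(a)) \;=\; h\chi_\L P_{\Om,\a}\,\op_\a^{\rm l}(a)P_{\Om,\a}\chi_\L\,(T_\a(a))^{p-1}
\;=\; \chi_\L\,\op_\a^{\rm l}(h\psi)\,P_{\Om,\a}\,\op_\a^{\rm l}(a)P_{\Om,\a}\chi_\L\,(T_\a(a))^{p-1}
\]
holds \emph{exactly} (since $\op_\a^{\rm l}(h\psi)=h\,\op_\a(\psi)$, and the Fourier multipliers $\op_\a(\psi)$ and $P_{\Om,\a}$ commute). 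After inserting the partitions $\{\phi_j\},\{\psi_k\}$ you then only need to commute the compactly supported $\op_\a^{\rm l}(b_{jk})$ past the outer $\chi_\L$, which is precisely~\eqref{localcom:eq}. This is how the paper proceeds; no approximation of $\chi_\Om$ and no appeal to Proposition~\ref{HS:prop} are needed at this step.
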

  
\begin{proof} 
Let $R>0$ be such that $\supp h\in B(\bold0, R)$, and 
either $\L$ or $\R^d\setminus\L$ 
is contained in $B(\bold0, R)$. 
Since the domains $\L\cap B(\bold0, R)$ 
and $\Om$ are bounded, we can cover their closures 
by finitely many open 
balls such that in each of them each domain 
$\L$ or $\Om$ is represented by a basic domain or by $\R^d$. 
Denote by $\{\phi_j\}$ 
and  $\{\psi_k\}$ the partitions of unity subordinate to these coverings. 
Represent 
\begin{equation*}
h\chi_\L P_{\Om, \a} = \sum_{j,k} \chi_\L \op_\a^{\rm l}(b_{jk})P_{\Om, \a},\ 
b_{jk}(\bx, \bxi) = h(\bx) \phi_j(\bx) \psi_k(\bxi).
\end{equation*}
Consequently, in order to prove \eqref{main_inf_h:eq} 
it suffices to find the sought asymptotics 
for the operator 
\begin{equation*}
 \chi_\L \op_\a^{\rm l}(b_{jk}) 
 P_{\Om, \a}\op_\a^{\rm l}(a) P_{\Om, \a}(T_\a(a; \L, \Om))^{p-1},
\end{equation*}
for each $j$ and $k$. 
By virtue of \eqref{localcom:eq} this is equivalent to studying the operator 
\begin{equation*}
\op_\a^{\rm l}(b_{jk}) \bigl(T_\a(a; \L, \Om)\bigr)^p. 
\end{equation*}
Now, due to \eqref{localization:eq}, we can replace each 
domain $\L$ or $\Om$ by the 
appropriate basic domain or by $\R^d$. 
Furthermore,  Lemma \ref{through:lem} ensures that the symbol $a$ can be 
replaced by the constant symbol $a \equiv 1$. 
Now Theorem \ref{main_local:thm} implies that 
\begin{align}\label{po:eq}
\tr \op_\a^{\rm l}(b_{jk}) g_p\bigl(T_\a(1; \L, \Om)\bigr)
= &\ \a^d \GW_0(b_{jk}; \L, \Om)\notag\\[0.2cm]
&\ + \a^{d-1} \log\a\  \GW_1(b_{jk}\GA(g_p; 1); \p\L, \p\Om) + o(\a^{d-1}\log\a),
\end{align}
as $\a\to\infty$. 
Summing over $j$ and $k$ we obtain formula \eqref{main_inf_h:eq}.  

Similarly, 
for the asymptotics \eqref{main_inf_h1:eq} it suffices to study the operator 
\begin{equation*}
\op_\a^{\rm l}(b_{jk}) \chi_\L\bigl(T_\a(a; \R^d, \Om)\bigr)^p\chi_\L. 
\end{equation*}
By Lemma \ref{semi:lem}, the trace of this operator equals 
$\a^{d}\GW_0(b_{jk} g_p(a); \L, \Om)+ O(\a^{d-1})$.
Summing over $j$ and $k$ we obtain formula \eqref{main_inf_h1:eq}, as required. 
\end{proof}  
   
The following lemma concentrates on the case 
of unbounded $\L$.

\begin{lem}\label{DA:lem} 
Suppose that $a\in \BS^{(d+2, d+2)}$. 
Let $\Om$ and $\L$ be Lipschitz domains 
such that $\Om$ and $\R^d\setminus\L$ are bounded. 
Let $h\in\plainC\infty_0(\R^d)$ be 
a function such that $h(\bx) = 1$ for all $\bx\in\R^d\setminus\L$. Then 
\begin{align}\label{DA:eq}
\|(I-h) \bigl[
g_p\bigl(T_\a(a, \L, \Om)\bigr) 
- g_p\bigl( T_\a(a, \R^d, \Om)\bigr)
\bigr]\|_{\GS_1}
\le &\ C \a^{d-1},
\end{align}
for any $p = 1, 2, \dots$, 
with a constant $C$ independent of $\a$. The same bound holds 
if $T_\a$ is replaced with $S_\a$. 
\end{lem}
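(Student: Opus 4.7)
The proof rests on the cancellation
\[
(I-h)\chi' = 0,\qquad \chi' := \chi_{\R^d\setminus\L} = 1-\chi_\L,
\]
which holds because $h\equiv 1$ on the support of $\chi'$. Setting $B := T_\a(a;\R^d,\Om) = P_{\Om,\a}\op_\a^{\rm l}(a)P_{\Om,\a}$ and $A := T_\a(a;\L,\Om) = \chi_\L B\chi_\L$, one has $A - B = -\chi' B - B\chi' + \chi' B\chi'$. Combining this with the telescoping identity $A^p - B^p = \sum_{k=0}^{p-1}A^k(A-B)B^{p-1-k}$, and using $\chi_\L\chi' = 0$ (so $A^k\chi' = 0$ for $k\ge 1$) together with $(I-h)\chi'=0$, the whole expression collapses to
\[
(I-h)(A^p - B^p) = -\sum_{k=0}^{p-1}(I-h)\,A^k B\chi'\,B^{p-1-k}.
\]

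Next I would repackage each summand as a commutator. Since $(I-h)\chi'=0$ implies $X(I-h)\chi'=0$ for every bounded operator $X$, we get $(I-h)A^k B\chi' = [(I-h), A^k B]\chi'$. Because $\chi_\L$ commutes with the multiplication operator $I-h$, a Leibniz expansion of $[(I-h), A^k B]$ on the factorization $A^k B = \chi_\L(B\chi_\L)^k B$ produces only commutators of the form $[(I-h), B]$, sandwiched between bounded factors. Together with the uniform bounds $\|A\|,\|B\|\le C$, this reduces the lemma to the single commutator estimate
\[
\|[h, B]\|_{\GS_1} \le C\a^{d-1}.
\]

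To establish this last bound, I would expand
\[
[h, B] = [h, P_{\Om,\a}]\op_\a^{\rm l}(a)P_{\Om,\a} + P_{\Om,\a}[h, \op_\a^{\rm l}(a)]P_{\Om,\a} + P_{\Om,\a}\op_\a^{\rm l}(a)[h, P_{\Om,\a}],
\]
and pick a $\bxi$-cutoff $\zeta\in\plainC\infty_0(\R^d_{\bxi})$ with $\zeta\equiv 1$ on $\overline{\Om}$. The Fourier-support identities $\op_\a(1-\zeta)P_{\Om,\a} = P_{\Om,\a}\op_\a(1-\zeta) = 0$ then yield $hP_{\Om,\a} = \op_\a^{\rm l}(h\zeta)P_{\Om,\a}$ and $P_{\Om,\a}h = P_{\Om,\a}\op_\a^{\rm r}(h\zeta)$, with $h\zeta$ now compactly supported in both $\bx$ and $\bxi$. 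Applying Proposition \ref{sandwich:prop} to the symbol $h\zeta$, together with estimate \eqref{lr:eq} and Proposition \ref{product:prop} (used to compose $\op_\a^{\rm l}(a)$ with $\op_\a^{\rm l}(h\zeta)$), each of the three commutators is shown to have trace-norm $O(\a^{d-1})$.

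The hardest step is this final commutator estimate. Proposition \ref{sandwich:prop} demands the symbol to be compactly supported in both $\bx$ and $\bxi$, whereas $h(\bx)$ is compactly supported only in $\bx$. The $\bxi$-cutoff device, made possible by the boundedness of $\Om$ and the Fourier-multiplier structure of $P_{\Om,\a}$, is precisely what converts $h$ into an admissible symbol for the commutator machinery developed in Section~3.
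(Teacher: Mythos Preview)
Your argument is correct and rests on the same two core ingredients as the paper's proof: the algebraic fact $(I-h)\chi_{\R^d\setminus\L}=0$, and the reduction to the single commutator estimate $\|[h,T_\a(a;\R^d,\Om)]\|_{\GS_1}=O(\a^{d-1})$ via a $\bxi$-cutoff $\zeta$ that converts $h$ into the compactly supported symbol $h\zeta$. The organisation differs: the paper proves a pseudo-commutation relation $(I-h)(T_\a(\L))^p\sim (T_\a(\R^d))^p(I-h)$ by induction on $p$ and then applies it twice (once for $\L$ itself, once for $\L=\R^d$) to control both $(I-h)[A^p-B^p]$ and the residual commutator $[B^p,I-h]$; you instead telescope $A^p-B^p$ directly, use $(I-h)A^k\chi'=0$ to kill two of the three pieces of $A-B$, and then rewrite each surviving term as a Leibniz expansion producing copies of $[h,B]$. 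Your route is a bit more explicit and avoids induction; the paper's is more compact. Neither buys anything the other does not.

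One small point to fix: Proposition~\ref{sandwich:prop}, which you invoke for $[\op_\a(h\zeta),P_{\Om,\a}]$, is stated only for \emph{basic} Lipschitz domains, and here $\Om$ is merely bounded Lipschitz. To make this step rigorous you need a finite partition of unity in $\bxi$ subordinate to a cover of $\overline\Om$ by balls in which $\Om$ coincides with a basic domain (or with $\R^d$ or $\varnothing$), and then apply the localised commutator bound \eqref{localcom:eq} from Lemma~\ref{localization:lem} to each piece. The paper handles this in the same way, borrowing the partition $\{\psi_k\}$ from the preceding lemma.
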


\begin{proof}  
For brevity we write $T_\a(\L) = T_\a(a; \L, \Om)$. 
For any two operators $A_1$ and $A_2$ 
we write $A_1\thicksim A_2$ if $\|A_1-A_2\|_{\GS_1}\le C\a^{d-1}$, 
with a constant $C$ independent of $\a$.

First we prove that 
\begin{align}\label{infcomm:eq}
(I-h) (T_\a(\L))^p \sim (T_\a(\R^d))^p(I-h).
\end{align}
Suppose that $p = 1$. 
Let $\eta\in\plainC\infty_0(\R^d)$ be a function 
such that $\eta\chi_\Om = \chi_\Om$, and let $b(\bx, \bxi) = h(\bx) \eta(\bxi)$.   
Since $(1-h)\chi_\L = 1-h$, we have 
\begin{equation}\label{perep:eq}
(I-h) T_\a(\L) = T_\a(\R^d)\chi_\L - \op_\a^{\rm l}(b) T_\a(\R^d) \chi_\L.  
\end{equation}
Using the partition of unity $\{\psi_j\}$ featuring in the 
proof of the previous lemma, and then bound \eqref{localcom:eq} 
and Lemma \ref{product:prop}, we can claim that  
\begin{equation*}
\op_\a^{\rm l}(b) T_\a(\R^d) \chi_\L\sim
 T_\a(\R^d) \op_\a^{\rm l}(b)\chi_\L 
 \sim  T_\a(\R^d) \op_\a^{\rm r}(b)\chi_\L
= T_\a(\R^d) h \chi_\L.
\end{equation*} 
Together with \eqref{perep:eq} this gives \eqref{infcomm:eq} for $p=1$.

Suppose now that \eqref{infcomm:eq} holds for $p= k$, and let us 
prove it for $p = k+1$. Write:
\begin{align*}
(I-h) (T_\a(\L))^{k+1} - &\ (T_\a(\R^d))^{k+1}(I-h)\\[0.2cm]
= &\ \bigl[(I-h) (T_\a(\L))^k 
-  (T_\a(\R^d))^k(I-h)\bigr] T_\a(\L) \\[0.2cm]
&\ + 
(T_\a(\R^d))^k\bigl[(I-h) T_\a(\L) - T_\a(\R^d)(I-h)\bigr].
\end{align*} 
The sought bound follows from \eqref{infcomm:eq} for $p=1$ and $p=k$.

To conclude the proof write
\begin{align*}
(I-h) \bigl[(T_\a(\L))^{p} - &\ (T_\a(\R^d))^{p}\bigr]\\[0.2cm]
 = &\ (I-h) (T_\a(\L))^p -  (T_\a(\R^d))^p(I-h)\\[0.2cm]
&\ + \bigl[ \bigl(T_\a(\R^d)\bigr)^p, I-h \bigr],
\end{align*}
so that \eqref{DA:eq} follows from \eqref{infcomm:eq} used twice: 
for the domain $\L$ itself, and for $\L = \R^d$.
\end{proof}

Now we can proceed to the proof of Theorems \ref{main:thm} 
and \ref{main_inf:thm}. As explained earlier, the proofs are conducted only 
for the operators $T_\a$.

\begin{proof} 
[Proof of Theorem \ref{main:thm}] 
Since $\L$ is bounded, use formula \eqref{main_inf_h:eq} 
with a function $h\in\plainC\infty_0(\R^d)$ such that $h\chi_\L = \chi_\L$. 
This completes the proof. 
\end{proof}

\begin{proof}[Proof of Theorem \ref{main_inf:thm}] 
If $\L$ is bounded, then Theorem \ref{main_inf:thm} 
follows from formulas \eqref{main_inf_h:eq} 
and \eqref{main_inf_h1:eq} with a function $h$ as in the above proof 
of Theorem \ref{main:thm}.

Suppose that $\R^d\setminus\L$ is bounded. 
%
%
%
Let $h\in\plainC\infty_0(\R^d)$ 
be a function such that $h(\bx) = 1$ for $\bx\in \R^d\setminus \L$. 
Due to Lemma 
\ref{DA:lem} it suffices to 
establish the formula
\begin{align*}
\tr \bigl\{h\bigl[ 
g_p(T_\a(\L)) - \chi_\L &\ g_p(T_\a(\R^d))\chi_\L
\bigr]\bigr\}\notag \\[0.2cm]
= &\ \a^{d-1}\log\a\  \GW_1(\GA(g_p; a); \L, \Om) + o(\a^{d-1}\log\a),
\end{align*}
where we have denoted $T_\a(\L) = T_\a(a; \L, \Om)$.
But this formula immediately follows from 
\eqref{main_inf_h:eq} and \eqref{main_inf_h1:eq} again. 
Thus the proof is complete.
\end{proof}

Theorems~\ref{main_anal:thm} and \ref{main_s:thm} are 
derived from Theorem~\ref{main:thm} 
by approximating $g$ with polynomials, in the same way as in \cite{Sob}, Section 12.


\bibliographystyle{amsplain}

\end{document}